\newtheorem{rmk}{Remark}
\begin{document}
	
	 \newcommand{\be}{\begin{equation}}
	 \newcommand{\ee}{\end{equation}}
	 \newcommand{\bt}{\beta}
	 \newcommand{\al}{\alpha}
	 \newcommand{\laa}{\lambda_\alpha}
	 \newcommand{\lab}{\lambda_\beta}
	 \newcommand{\no}{|\Omega|}
	 \newcommand{\nd}{|D|}
	 \newcommand{\Om}{\Omega}
	 \newcommand{\h}{H^1_0(\Omega)}
	 \newcommand{\lt}{L^2(\Omega)}
	 \newcommand{\la}{\lambda}
	 \newcommand{\ro}{\varrho}
	 \newcommand{\cd}{\chi_{D}}
	 \newcommand{\cdc}{\chi_{D^c}}
	 \newtheorem{thm}{Theorem}[section]
	 \newtheorem{cor}[thm]{Corollary}
	 \newtheorem{lem}[thm]{Lemma}
	 \newtheorem{prop}[thm]{Proposition}
	 \theoremstyle{definition}
	 \newtheorem{defn}{Definition}[section]
	 \newtheorem{exam}{Example}[section]
	 \theoremstyle{remark}
	 \newtheorem{rem}{Remark}[section]
	 \numberwithin{equation}{section}
	 \renewcommand{\theequation}{\thesection.\arabic{equation}}
	 \numberwithin{equation}{section}
	\newtheorem{alg}{Algorithm}

\newcommand{\R}{\mathbb{R}}
\renewcommand{\div}{\operatorname{div}}
\newcommand{\st}{\;:\;}
\newcommand{\dx}{\,\mathrm{d}x}
\renewcommand{\d}{\mathrm{d}}
\newcommand{\norm}[1]{\left\|#1\right\|}
\newcommand{\mean}{\operatorname{mean}}

	\title[The second  eigenvalue   of  $p$-Laplace operator ]{Approximation of the second  eigenvalue   of  the  $p$-Laplace operator    in symmetric domains}
	
\author{Farid Bozorgnia}
\address{CAMGSD, Department of Mathematics, Instituto Superior T\'{e}cnico, Lisbon}
\email{bozorg@math.ist.utl.pt}

	 \date{\today}

\vskip12mm

\maketitle

\begin{abstract}
 A new idea to approximate the second eigenfunction and  the second eigenvalue of $p$-Laplace operator is given. In the case of  Dirichlet boundary condition, the scheme has the restriction      that the positive  and the negative part of the second eigenfunction have equal $L^p$-norm, however in the case of   Neumann boundary condition,  our algorithm has   not such  restriction.    Our algorithm generates a descending
sequence of positive numbers that  converges  to the second eigenvalue.  We give  various examples and computational tests.

 \end{abstract}
\[
\]

	\keywords{\textbf{Keywords:}  Nonlinear  Eigenvalue,  p-Laplace, Numerical methods. }

	 \textbf{2010 MSC:}   35P30, 34L15, 34L16,  35J92.

\bigskip

\section{Introduction}

 The $p$-Laplace operator
is a  homogeneous   nonlinear  operator  which arises    frequently in various applications in  physics, mechanics, and
image processing. Derivation of the $p$-Laplace operator from a nonlinear Darcy law and the continuity equation has been described in \cite{BGKT}.
 The eigenvalues  and the corresponding eigenfunctions of the $p$-Laplace operator   have  been much discussed   in the  literature,  due to mathematical challenges,   open questions    cf.  \cite{ABP, Lind,  Lindd, DKN},     and    regarding related  applications in image processing we refer to \cite{ETT, GG}. In particular, the second eigenvalue and eigenfunction  have been studied extensively, see   \cite{AT, Pa, PGP}.

 For dimension higher than one, except for  the case $p=2$, the
structure of higher  eigenfunctions are not well understood. For the    one-dimensional case  see \cite{FP, DM,  Lin}. In this work , we are interested in numerical approximation of the  second eigenvalue  $\lambda_2$ and the corresponding eigenfunction  of the  $p$-Laplace operator.    We treat the second eigenvalue as
an  optimal bi-partition of the first eigenvalue.
 There are various problems in mathematical physics and probability theory related
to  optimal partitioning of   the first  eigenvalue,  see  \cite{BH1,BB, BP}.  In \cite{HO}  investigated
Constrained Descent Method and the Constrained Mountain Pass Algorithm
  to approximate  the two smallest eigenvalue for $1.1 \le p \le 10.$

The structure of this paper is as follows. In Section 2, we review mathematical background and characterization of the second eigenvalue. Next in Section  3, we present our numerical approximation along the proof of convergence.  In Section 4,  as  an application of our algorithm,  we study  the spectral  clustering.  Last section  deals with the numerical implementation.

\section{Mathematical Background }

In this section,  we briefly  review some known results about the first and second eigenfunction
of $p$-Laplace operator with zero Dirichlet boundary condition in bounded domain.

For $ 1\leq p <\infty$ the first eigenvalue of  the $p$-Laplace operator in $W^{1,p}_{0}(\Omega),$ denoted by $\lambda_{1,p} (\Omega)$   is   given by
 \begin{equation}\label{n2}
\lambda_{1,p}(\Omega):=\underset{ u\neq 0}{\underset{u \in W^{1,p}_{0}(\Omega)}{ \min}} \frac{\int_{\Omega} |\nabla u(x)|^{p} dx }{\int_{\Omega}| u(x)|^{p}dx}.
 \end{equation}
 For every $1<p< \infty,$ the first eigenvalue is simple and isolated and the first eigenfunction doesn't change the sign.    The corresponding minimizer satisfies the Euler-Lagrange equation
\begin{equation}\label{2}
\left \{
\begin{array}{ll}
 - \Delta_{p} u =\lambda |u|^{p-2} u    & \text{in}    \quad  \Omega,\\
 u=0        & \text{on }    \quad \partial  \Omega.\\
  \end{array}
\right.
\end{equation}
Here $ \Delta_{p} u=  \text{div}(| \nabla u|^{p-2} \nabla u)$ which for $p=2,$ we have Laplace operator.

\begin{defn}\label{piegen}
A  non zero function  $u\in W^{1, p}_{0}(\Omega) \cap C(\overline{\Omega}),$ is called a $p$-eigenfunction in the weak sense  if there exist a $\lambda \in \mathbb{R}$
such that
\begin{equation}\label{p2}
\int_{\Omega}|\nabla u|^{p-2} \nabla u \cdot \nabla\phi \, dx=\lambda \int_{\Omega}| u |^{p-2}  u \, \phi \, dx,  \quad \forall \, \phi \in W^{1,
p}_{0}(\Omega).
\end{equation}
\end{defn}
The associated number $\lambda$ is called a $p$-eigenvalue.   It is not, however, known whether every such quantity is a "variational eigenvalue" like  for
the case $p = 2.$   In \cite{AT} Anane and Tsouli  gave a characterization of the variational eigenvalues of Problem (\ref{2}) by  the following minimax principle.  To do this,   first  define
 Krasnoselskii genus of a set  $A   \subseteq  W^{1,p}_{0}(\Omega) $     by
 \[
 \gamma(A)=\textrm{min}  {\{ k \in \mathbb{N} : \text{ there exist   }  f: A\rightarrow \mathbb{R}^{k}\setminus{0}, f \, \text{continuous and odd} }\}.
 \]
For $ k \in \mathbb{N}$ define
\[
\Gamma_{k} := {\{ A \subseteq  W^{1,p}_{0}(\Omega),\,   \text{symmetric, compact  and}\, \gamma(A)\ge k }\}.
\]
Then the eigenvalues of the $p$-Laplace  are
 \begin{equation}\label{hgh}
\lambda_{k,p}(\Omega)=\underset{ A\in \Gamma_{k}}{ \min  } \underset{u \in A}{ \sup} \frac{\int_{\Omega} |\nabla u(x)|^{p} dx }{\int_{\Omega}| u(x)|^{p}dx},
  \end{equation}
which   satisfying
 \[
 0 < \lambda_{1} < \lambda_{2} \le \cdots \le \lambda_{k} \rightarrow  \infty,
 \]
 as $k$ tends to infinity, see \cite{AT, LS, Lind, Lindd}.

It is shown  by Anane and Tsouli that  $\lambda_2$  defined by (\ref{hgh}) is essentially  the
second eigenvalue of the Dirichlet   $p$-Laplace, means  that the
eigenvalue problem (\ref{2}) has no other eigenvalue between  $\lambda_1$  and  $\lambda_2$.

 In the one dimensional case, $\Omega=( a,b) \subset \mathbb{R},$  it is known that all  eigenvalues  are  simple and  the eigenfunction corresponding to
  $\lambda_n $  has exactly  $n + 1$ zeros,  counting the ending  boundary points $a, b$.    The eigenvalues can be computed explicitly  by  variational formula and
the corresponding eigenfunctions are obtained in terms of the Gaussian hypergeometric function, see \cite{FP,Lin}).

 Here, we consider  another characterization of  the second eigenvalue which we use for our numerical simulation.
\begin{defn}
 Given a bounded open set
$\Omega  \subset  \mathbb{R}^d,$  a class    bi-partition of $\Omega$ (or decomposition)  is   a family of pairwise  disjoint, open and connected  subsets   ${\{\Omega_{1}, \Omega_{2} }\}$
 such that
 \[
 \Omega_{1},  \Omega_{2} \subseteq  \Omega, \, \Omega_{1} \cap \Omega_{2} =\emptyset,\, \Omega=  \Omega_{1} \cup \Omega_{2}.
 \]
 By $\mathfrak{D}_{2}$ we mean the set of all bi-partition of $\Omega$.
 \end{defn}


  For  any  arbitrary partition
 $\mathfrak{D}= (\Omega_{1},\Omega_{2}) \in \mathfrak{D}_{2},$  we define
\[
\Lambda_{2} (\mathfrak{D}) =\max \left( \lambda_{1}(\Omega_{1}), \lambda_{1}(\Omega_{2})  \right).
\]
 Also  let  $\mathfrak{L_{2}} (\Omega)$  denote   the infimum  of
   $\Lambda_{2}(\mathfrak{D}),$   over all the bi-partition i.e.,
\begin{equation}\label{f15}
 \mathfrak{L_{2}} (\Omega)=\underset{\mathfrak{D} \in \mathfrak{D_{2}}}{\inf}\Lambda_{2}(\mathfrak{D}).
 \end{equation}
An  optimal  bi-partition  is  a partition which realizes the infimum   in (\ref{f15}). For    $p=2$ the optimal partition of the  first eigenvalue   has been studied  extensively, see \cite{BB,BH}.

We know that the second eigenfunction changes its  sign on the domain, i.e., the second eigenfunction can be written as $ u=u_{+}-u_{-},$
   where
    $$  u_{+}=\max(u,0), \, u_{-}=\max(-u,0).$$ Obviously
    \[
    u_{+},  u_{-} \geq 0,   \quad      u_{+} \cdot u_{-}=0  \quad   \text{  in }\ \Omega.
    \]
Nodal domains of $u$  denoted by   $\Omega_{+}$ and   $\Omega_{-}$  are defined as  the support of positive and negative part of $u$
    \[
     \Omega_{+}={\{ x\in \Omega: u(x)>0}\}, \quad  \Omega_{-}={\{ x\in \Omega: u(x)<0}\}.
       \]

The following Lemma in  \cite{PGP}   shows  existence for minimal two partitions and   implies that
\[
 \lambda_{2}(\Omega )= \mathfrak{L_{2 }}(\Omega).
 \]
\begin{lem}\label{f5}  There exists  $u \in  W^{1,p}_{0} (\Omega) $  such that  $ ({\{ u_{+}  > 0}\} , {\{ u_{-}  > 0}\} )$   achieves infimum  in (\ref{f15}). Furthermore,

\[
\lambda_{1}({\{    u_{+}  > 0}\} ) = \lambda_{1}({\{  u_{-}  > 0}\}).
\]
\end{lem}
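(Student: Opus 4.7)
The natural candidate for $u$ is a second eigenfunction of $-\Delta_p$ on $\Omega$ (whose existence is guaranteed by the minimax formula (\ref{hgh}) applied with $k=2$). My plan is to take such a $u$, set $\Omega_+=\{u>0\}$ and $\Omega_-=\{u<0\}$ (which are open by the known $C^{0,\alpha}$ regularity of $p$-eigenfunctions), and show that this pair simultaneously realizes the infimum in (\ref{f15}) and produces equal first eigenvalues on the two nodal pieces. The proof then splits into two symmetric inequalities: $\mathfrak{L}_2(\Omega)\le\lambda_2(\Omega)$, realized by this pair, and $\mathfrak{L}_2(\Omega)\ge\lambda_2(\Omega)$, which forces the equality $\lambda_1(\Omega_+)=\lambda_1(\Omega_-)$ at any minimizer.

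For the first inequality, I will argue that $u_+$ is a first eigenfunction of $-\Delta_p$ on $\Omega_+$ with eigenvalue exactly $\lambda_2(\Omega)$. To see this, I plug $\phi=u_+$ into the weak formulation (\ref{p2}) of the eigenvalue equation satisfied by $u$: using $|\nabla u|^{p-2}\nabla u\cdot\nabla u_+=|\nabla u_+|^{p}$ a.e. and $|u|^{p-2}u\,u_+=u_+^{p}$, I obtain
\[
\int_{\Omega_+}|\nabla u_+|^p\,dx=\lambda_2(\Omega)\int_{\Omega_+}u_+^{p}\,dx.
\]
Since $u_+\in W^{1,p}_0(\Omega_+)$ and $u_+>0$ on $\Omega_+$, the Rayleigh quotient attains a value equal to $\lambda_2(\Omega)$ on a strictly positive admissible function, and because the first eigenfunction of $-\Delta_p$ is the unique (up to sign) one-signed eigenfunction, this forces $\lambda_1(\Omega_+)=\lambda_2(\Omega)$. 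The same computation with $\phi=-u_-$ yields $\lambda_1(\Omega_-)=\lambda_2(\Omega)$. Consequently $\Lambda_2((\Omega_+,\Omega_-))=\lambda_2(\Omega)$, so $\mathfrak{L}_2(\Omega)\le\lambda_2(\Omega)$, and the two first eigenvalues are equal at this candidate partition.

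For the reverse inequality $\mathfrak{L}_2(\Omega)\ge\lambda_2(\Omega)$, I take an arbitrary bi-partition $\mathfrak{D}=(\Omega_1,\Omega_2)$ with first eigenfunctions $\psi_1,\psi_2$ extended by zero to $\Omega$, normalized by $\|\psi_i\|_{L^p(\Omega)}=1$. Since $\psi_1$ and $\psi_2$ have disjoint supports, the symmetric compact set
\[
A:=\bigl\{a\psi_1+b\psi_2 \st a,b\in\mathbb{R},\ |a|^p+|b|^p=1\bigr\}
\]
is homeomorphic to $S^1$ via an odd map, hence $\gamma(A)\ge 2$ and $A\in\Gamma_2$. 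For every $v=a\psi_1+b\psi_2\in A$, disjoint supports give $\int_\Omega|v|^p=|a|^p+|b|^p=1$ and $\int_\Omega|\nabla v|^p=|a|^p\lambda_1(\Omega_1)+|b|^p\lambda_1(\Omega_2)\le\max(\lambda_1(\Omega_1),\lambda_1(\Omega_2))=\Lambda_2(\mathfrak{D})$. Plugging $A$ into (\ref{hgh}) gives $\lambda_2(\Omega)\le\Lambda_2(\mathfrak{D})$, and taking the infimum over $\mathfrak{D}$ yields $\lambda_2(\Omega)\le\mathfrak{L}_2(\Omega)$. Combining the two inequalities shows $(\Omega_+,\Omega_-)$ realizes the infimum and that the equality of the two first eigenvalues follows from the identity $\lambda_1(\Omega_\pm)=\lambda_2(\Omega)$ established in the previous paragraph.

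The main obstacle I anticipate is the step identifying $u_+$ as a first eigenfunction on $\Omega_+$: one must justify that $u_+\in W^{1,p}_0(\Omega_+)$ (which follows from $u\in C(\overline\Omega)$ vanishing on $\partial\Omega_+$) and that $\phi=u_+$ is an admissible test function in (\ref{p2}), and then rule out the alternative that $u_+$ is a higher eigenfunction of $-\Delta_p$ on $\Omega_+$ — which is exactly where the one-sign characterization of the first eigenfunction (simplicity and positivity from the Picone-type inequality) is invoked. A secondary subtlety is that $\Omega_\pm$ need not be connected, so the "bi-partition" language of the definition should be read tolerantly (or the components of maximal genus-2 contribution selected); I will note this but not let it obstruct the main argument.
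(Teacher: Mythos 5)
The paper does not prove this lemma at all: it is imported verbatim from \cite{PGP} (Della Pietra--Gavitone--Piscitelli) and used as a black box, so there is no in-paper argument to compare against. Your proof is the standard one and is essentially sound. The lower bound $\mathfrak{L}_2(\Omega)\ge\lambda_2(\Omega)$ via the genus-$2$ set $A=\{a\psi_1+b\psi_2 \st |a|^p+|b|^p=1\}$ is exactly the classical Anane--Tsouli-type argument and is correct (disjoint supports make the Rayleigh quotient on $A$ equal to $|a|^p\lambda_1(\Omega_1)+|b|^p\lambda_1(\Omega_2)\le\Lambda_2(\mathfrak{D})$, and $A$ is odd-homeomorphic to $S^1$ so $\gamma(A)=2$). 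For the upper bound, be careful that the Rayleigh-quotient computation with $\phi=u_\pm$ only yields $\lambda_1(\Omega_\pm)\le\lambda_2(\Omega)$; to get the ``Furthermore'' equality you genuinely need the stronger fact, which you do state, that $u_+$ is a \emph{positive eigenfunction} of the $p$-Laplacian on $\Omega_+$ with eigenvalue $\lambda_2(\Omega)$ (it satisfies the equation weakly against all $\phi\in W^{1,p}_0(\Omega_+)$ by zero extension), combined with the characterization of $\lambda_1$ as the only eigenvalue admitting a one-signed eigenfunction; note this characterization applies componentwise, which also disposes of your worry about $\Omega_\pm$ being disconnected. The remaining technical point you flag, $u_+\in W^{1,p}_0(\{u>0\})$, is correctly resolved via $u\in C(\overline{\Omega})$: the truncations $(u-\varepsilon)_+$ have support in the compact set $\{u\ge\varepsilon\}\subset\Omega_+$ and converge to $u_+$ in $W^{1,p}$. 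So your argument fills a gap the paper leaves to the literature, and does so correctly; the only cosmetic mismatch is with the paper's (internally inconsistent) definition of bi-partition requiring connected pieces with $\Omega=\Omega_1\cup\Omega_2$, which, as you note, must be read loosely for the statement to make sense at all.
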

It is also known that any eigenfunction associated to an eigenvalue different from $\lambda_{1}$  changes sign.  The following   properties for second eigenvalue  hold:
\begin{itemize}
\item
If  $  \Omega_1 \subseteq    \Omega_2 \subseteq    \Omega,$ then
\[
 \lambda_2(p, \Omega_2) \le  \lambda_2(p, \Omega_1).
\]
\item Let $\Omega$ be a bounded domain in   $\mathbb{R}^d,$ then the eigenfunction associated to  $\lambda_2(p, \Omega)$ admits exactly two nodal domains.
\item The Courant theorem implies that in the linear case $p = 2,$  the number of nodal domains of an eigenfunction associated to $\lambda_2$ is exactly $2.$
\item  In \cite{ADS, ABP} is  shown    that the
second eigenfunctions are not radial in ball.

\end{itemize}

The limiting cases $p\rightarrow 1 $ and $p\rightarrow \infty$ are  more   complicated   and requires tools from non smooth critical point theory and the concept  viscosity solutions. Note for the limiting case $p$ tends to one, there are
 several ways  to define  the
second eigenfunction of the $1$-Laplace operator which  it does not
satisfy many of the properties of the second eigenfunction of the p-Laplace
operator in general\cite{Pa}.   As  $p$ tends to one,   in some cases the second eigenfunction
takes the form
\[
u_2=c_{1} \chi_{C_{1}} - c_{2}  \chi_{C_{2}}.
\]
Here $\chi_{A}$ is the characteristic function of given set $A$, the pair  $(C_1, C_2)$  is  so called
Cheeger-$2$-cluster of $\Omega$, while in other cases functions of that type can't be
eigenfunctions at all.

In the limiting case as $p$ tends to infinity,  the second eigenvalue has a geometric characterization.  Following    Section 4 of  \cite{JL}  define
\begin{equation}\label{98}
\Lambda_{2} \ = \ \frac{1}{r_{2}},
\end{equation}
where
$$r_{2}= \sup \{ r \in \mathbb{R}^+ : \text{there are  disjoint balls } B_1, B_2 \subset \Omega \text{ with radius } r\}.$$
Then  the following lemma from \cite{JL} states that the second eigenvalue of infinity Laplace is  $\Lambda_{2}$.
\begin{lem}
Let $\lambda_{2}(p)$ be the second $p$-eigenvalue in $\Omega.$ Then it holds that
\[
 \underset { p \rightarrow \infty }  {\lim}  \lambda_2(p)^{\frac{1}{p}}  \rightarrow  \Lambda_{2}.
\]
  $\Lambda_{2} \in \mathbb{R}$ is the second eigenvalue of the infinity Laplace.
\end{lem}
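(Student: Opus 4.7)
The plan is to combine two ingredients: the bi-partition identity $\lambda_2(p,\Omega) = \inf_{\mathfrak{D}\in \mathfrak{D}_2}\max_{i=1,2}\lambda_1(p,\Omega_i)$ from Lemma \ref{f5}, and the classical Juutinen--Lindqvist--Manfredi limit $\lambda_1(p,U)^{1/p}\to 1/r(U)$ for any bounded open set $U$, where $r(U)$ denotes the inradius of $U$. Taking $p$-th roots, the target reads
\[
\lambda_2(p)^{1/p}\;=\;\inf_{\mathfrak{D}}\max_{i}\lambda_1(p,\Omega_i)^{1/p}\;\longrightarrow\;\inf_{\mathfrak{D}}\max_{i}\frac{1}{r(\Omega_i)}\;=\;\frac{1}{r_2},
\]
and I would establish the two inequalities separately, identifying the resulting geometric quantity with $r_2$ at the end.

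For the upper bound, given $\varepsilon>0$ pick by definition of $r_2$ two disjoint balls $B_1,B_2\subset\Omega$ of common radius $r_2-\varepsilon$, and construct any admissible bi-partition $(\Omega_1,\Omega_2)\in\mathfrak{D}_2$ with $B_i\subseteq \Omega_i$ (for instance, by cutting $\Omega$ with an affine hyperplane separating $B_1$ from $B_2$, then assigning each remaining piece of $\Omega$ to either side). The domain monotonicity property recorded in Section 2 gives $\lambda_1(p,\Omega_i)\le \lambda_1(p,B_i)$, so
\[
\lambda_2(p)^{1/p}\;\le\;\max_{i}\lambda_1(p,B_i)^{1/p}\;=\;\lambda_1(p,B_{r_2-\varepsilon})^{1/p}\;\longrightarrow\;\frac{1}{r_2-\varepsilon},
\]
and letting $\varepsilon\downarrow 0$ closes this direction.

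For the lower bound, the key geometric observation is that in any bi-partition $(\Omega_1,\Omega_2)$ the inscribed balls realizing $r(\Omega_1)$ and $r(\Omega_2)$ are disjoint subsets of $\Omega$, hence $\min_i r(\Omega_i)\le r_2$, i.e.\ $\max_i 1/r(\Omega_i)\ge 1/r_2$. To convert this into a bound on eigenvalues I would use a quantitative Hardy-type estimate of the form $\lambda_1(p,U)^{1/p}\ge c(p)/r(U)$ with $c(p)\to 1$ as $p\to\infty$, valid for every bounded open $U\subseteq\Omega$. This yields $\max_i\lambda_1(p,\Omega_i)^{1/p}\ge c(p)/r_2$ uniformly in the partition, and after taking the infimum and then the limit one gets $\liminf_{p\to\infty}\lambda_2(p)^{1/p}\ge 1/r_2$. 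The main obstacle is precisely this uniform Hardy constant: it is clean for convex pieces via the $p$-trigonometric constant $\pi_p$, but for irregular subdomains it requires a Hardy inequality with a constant that depends only on $\Omega$. The cleanest workaround, and the one I would fall back on if the quantitative bound proves awkward, is a compactness/viscosity-limit argument: rescale near-optimal $p$-eigenfunctions $u_p$ and pass to a uniform limit $u_\infty$; its two nodal domains must then contain two disjoint balls of radius $r_2$ in $\Omega$, matching the infinity-Laplace characterization $\Lambda_2=1/r_2$ directly.
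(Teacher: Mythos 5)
The paper offers no proof of this statement at all: the lemma is quoted from Juutinen--Lindqvist \cite{JL} (see the sentence introducing it), so there is no internal argument to measure yours against. Your sketch is, in substance, the proof strategy of the cited literature: upper bound by planting two disjoint balls of radius $r_2-\varepsilon$ and using domain monotonicity plus $\lambda_1(p,B_r)^{1/p}\to 1/r$; lower bound by observing that the two members of any bi-partition carry disjoint inscribed balls, whence $\min_i r(\Omega_i)\le r_2$. Two remarks on the details. First, for the upper bound you do not need to complete $B_1,B_2$ to an admissible partition of all of $\Omega$ (which is delicate under the paper's definition, since a connected $\Omega$ is never the disjoint union of two nonempty open sets): testing the genus-$2$ minimax \eqref{hgh} with the unit sphere of $\mathrm{span}\{u_{B_1},u_{B_2}\}$, where $u_{B_i}$ are the first eigenfunctions of the balls, gives $\lambda_2(p,\Omega)\le\max_i\lambda_1(p,B_i)$ directly, because disjoint supports make the Rayleigh quotient on that sphere a weighted mean of $\lambda_1(p,B_1)$ and $\lambda_1(p,B_2)$.

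Second, the obstacle you flag in the lower bound is real but is exactly the lemma of Juutinen--Lindqvist--Manfredi underlying the first-eigenvalue asymptotics, and it is uniform in the subdomain: for $p>d$ and $u\in W^{1,p}_0(U)$ the Morrey estimate applied in the ball $B(x,\mathrm{dist}(x,\partial U))\subseteq U$ gives $|u(x)|\le C(d,p)\,\mathrm{dist}(x,\partial U)^{1-d/p}\norm{\nabla u}_{L^p(U)}$ with $C(d,p)$ independent of $U$ and $C(d,p)\to 1$ as $p\to\infty$; integrating over $U\subseteq\Omega$ yields
\[
\lambda_1(p,U)^{1/p}\ \ge\ \frac{r(U)^{d/p}}{C(d,p)\,|\Omega|^{1/p}}\cdot\frac{1}{r(U)},
\]
and since $t\mapsto t^{d/p-1}$ is decreasing, the piece with $r(\Omega_i)\le r_2$ satisfies $\lambda_1(p,\Omega_i)^{1/p}\ge c(d,p,|\Omega|)/r_2$ with $c\to 1$, uniformly over all bi-partitions. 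So your first workaround already closes the argument and the compactness/viscosity fallback is not needed. The only unproved input you rely on that the paper also leaves unproved is the identity $\lambda_2(p,\Omega)=\mathfrak{L}_2(\Omega)$ of \eqref{f15}, imported from \cite{PGP}; granting that, your proof is correct and is essentially the one the paper implicitly defers to \cite{JL}.
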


Furthermore,  it is  shown in \cite{JL} that the second eigenfunction of the infinity Laplace operator can be obtained as a viscosity solution of the following equation
\begin{equation}
\label{eq:homogeneous}
F_{\Lambda}(x, u,  \nabla u,  D^2 u) \ = \ 0 \quad \text{ for } x \in \Omega \ ,
\end{equation}
for which $F_\Lambda$ is given by
\begin{equation}
\label{eq:inf-lpc_eigenproblem_2}
 F_{\Lambda}(x, u , \nabla u , D^{2} u) \ = \
\left \{
\begin{array}{lrl}
\min{\{|\nabla u |- \Lambda u,  - \Delta_{\infty} u \}} &   u(x)>0,\\
-\Delta_{\infty} u &   u(x)=0,\\
\max{\{- |\nabla u |- \Lambda u,  - \Delta_{\infty} u \}} &   u(x) < 0.
\end{array}
\right.
\end{equation}
Here  $ \Lambda= \Lambda_2 \in \mathbb{R}$ denotes the second eigenvalue of the infinity Laplace  given by \eqref{98}.

\section{An iterative scheme }

In this section we discuss our algorithm to approximate the second eigenvalue $\lambda_2$  and corresponding  eigenfunction  denoted by $u$.  For Dirichlet boundary condition  our  main assumption is that domain $\Omega$ has the following  symmetric property
\[
\| u_{+}\|_{L^p(\Omega)}=\|  u_{-}\|_{L^p(\Omega)}.
\]

In \cite{BB2}  the authors studied gradient flows of $p$-homogeneous functionals on a Hilbert space and proved that after suitable rescaling the flow always converges to a nonlinear eigenfunction of the associated subdifferential operator. They also gave conditions for convergence to the first eigenfunction.

 The inverse power method is known to be an efficient method to approximate   the first eigenvalue of given operator see \cite{FB, LH,BMT}.   Our aim here is to extend this method combining with Lemma \ref{f5} to approximate the  second eigenvalue.

Following  Lemma  \ref{f5} and notation introduced before this Lema, we know that   restriction of  the second eigenfunction on each nodal domain is   the first eigenfunction,
i.e.,
\[
\lambda_{2} (\Omega)=\lambda_{1} (\Omega_{+})=\lambda_{1} (\Omega_{-}).
\]
So the second eigenvalue problem can be written as
\begin{equation}\label{228}
\left \{
\begin{array}{lll}
- \Delta_{p} (u_{+} -u_{-}) = \lambda_{1}(\Omega_{+})  u_{+}^{p-1}      -  \lambda_{1} (\Omega_{-})   u_{-}^{p-1}    &  \text{  in }\ \Omega, \\
 u_{+} = u_{-}=0         & \text{  on }    \partial  \, \Omega.
  \end{array}
\right.
\end{equation}
  The main steps are  as follows:

\begin{enumerate}
\item   Choose  arbitrary initial bi-partition  $\Omega_{+}^{0}$ and  $\Omega_{-}^{0}.$  Since  $\Omega_{+}^{0}$ and  $\Omega_{-}^{0}$ are disjoint and connected   components of $\Omega$ this prevent of obtaining higher eigenfunctions than second one.

 \item  Given    $\Omega_{+}^{k}$ and  $\Omega_{-}^{k},$   $k=0, 1, \cdots, $  obtain the  first eigenvalues  denoted by  $ \lambda_{1}^{k}(\Omega_{+}),$    $ \lambda_{1}^{k}(\Omega_{-})$  and
  first eigenfunctions  $ (u_{+}^{k},   u_{-}^{k} )$  normalized in $L^p$  related to $(\Omega_{+}^{k},  \Omega_{-}^{k}).$
     \item  Solve  the  following boundary value problem
     \begin{equation*}
\left \{
\begin{array}{lll}
- \Delta_{p} u = \lambda_{1}^{k}(\Omega_{+})  (u_{+}^{k})^{p-1}      -  \lambda_{1} (\Omega_{-})   (u_{-}^k)^{p-1}    &  \text{  in }\ \Omega, \\
u=0         & \text{  on }    \partial  \, \Omega.
  \end{array}
\right.
\end{equation*}

     \item  Update   $(\Omega_{+},  \Omega_{-})$ as supports of positive part and negative part of the solution $u$.
     \item Go to step (2).

     \end{enumerate}
     Note that in step 2, given  $\Omega_{+}^{k}$ and  $\Omega_{-}^{k},$  one needs to calculate $\lambda_{1}(\Omega_{+}^{k})$  and $ \lambda_{1}(\Omega_{-}^{k})$ and corresponding first eigenvalues for each sub-domain.  This step can be modified by  implementing  the  inverse power method along our Algorithm.  Assume    $u_{+}^{k}$ and $u_{-}^{k}\,$   are given   normalized in $L^p$    with   disjoint supports, then  define   values  $\lambda_{+}^{k}  $ and  $\lambda_{-}^{k}$  by
 \begin{equation}\label{sun5}
\left \{
\begin{array}{ll}
\lambda_{+}^{k}= \lambda_{+}^{k}(\Omega_{+}^{k})=  \int_{\Omega_{+}^{k}} |\nabla u_{+} ^{k}(x)|^{p}\, dx, &\\\\
   \lambda_{-}^{k}= \lambda_{-}^{k}(\Omega_{-}^{k})=  \int_{\Omega_{-}^{k}}
 |\nabla u_{-}^{k}(x)|^{p}\, dx. &
  \end{array}
\right.
\end{equation}
The   algorithm  to approximate  the second eigenvalue and the second eigenfunction  are    as follows:

{\scriptsize{
\begin{algorithm}[H]
	\SetAlgoLined

	\SetKwInOut{Input}{inputs}
	\SetKwInOut{Output}{output}
	\SetKwInOut{Required}{requied}
	
	\Indm
	\Input{$u^{0}=u_{+}^{0}- u_{-}^{0}, \epsilon$ .}
 \Output{Approximation of second eigenvalue and second eigenfunction.}
	\Indp
	\BlankLine

\[
\]

 \begin{enumerate}
 \item
    Set $ k=0$, choose  initial arbitrary  guesses
$u_{+}^{0}>0, u_{-}^{0}> 0$ having disjoint supports, normalized  in  $L^{p}(\Omega)$
  and vanishing on the boundary with $ \Omega_{+}$ and
$\Omega_{-}$ as the supports of  functions $u_{+}^{0}$ and $u_{-}^{0}$ respectively.

	\item 	Given   $ u^{k}= u_{+}^{k}-u_{-}^{k}$ where   $u_{+}^{k}$  and $u_{-}^{k}$ are
 normalized in $L^p,$   with  disjoint supports, then obtain $ \lambda_{+}^{k}$ and
  $\lambda_{-}^{k}$  by (\ref{sun5}). \;

\item Solve

\begin{equation}\label{sunshine86}
\left \{
\begin{array}{lll}
- \Delta_{p} u =   |u^{k}|^{p-2}   \bigg(\lambda_{+}^{k}  u_{+}^{k}   -\lambda_{-}^{k} u_{-}^{k} \bigg)   &  \text{in }\ \Omega, \\
 u =0        & \text{on }    \partial  \, \Omega.\\
  \end{array}
\right.
\end{equation}

\item Set  $u_{+}^{k+1}$  and  $u_{-}^{k+1}$ as  positive  and negative part  of   the solution of (\ref{sunshine86}).  Normalized  $u_{+}^{k+1}$  and  $u_{-}^{k+1}$ in $L^p$. Calculate $\lambda_{1}^{k+1}(\Omega_{1}),$   $\lambda_{1}^{k+1}(\Omega_{2}).$ \\

\item If $|\lambda_{1}^{k+1}(\Omega_{+})-\lambda_{1}^{k}(\Omega_{+})| \ge \epsilon,\,  \& \, |\lambda_{1}^{k+1}(\Omega_{-})-\lambda_{1}^{k}(\Omega_{-})| \ge \epsilon$
then

\item Set $k=k+1$ and  go step (2)

 \end{enumerate}
	\caption{Second eigenvalue algorithm }
	\label{Algorithm 1 )}
\end{algorithm}
}}

\begin{rmk}
  Note that $ \Omega_{+}$ and $\Omega_{-}$   change  in iterations but for simplicity  we write $ \Omega_{+}$ and $\Omega_{-}$  instead of $ \Omega_{+}^{k}$ and $\Omega_{-}^{k}$.
  \end{rmk}
  \begin{rmk}
Consider the  case $p=2$. Let denote the first eigenfunction by $w_1$.   The second
eigenvalue is given by
\begin{equation}\label{hg}
\lambda_{2}(\Omega)= \underset{u \perp w_1}{\text{inf}} \frac{\int_{\Omega} |\nabla u(x)|^{2} dx }{\int_{\Omega}| u(x)|^{2}dx}.
  \end{equation}

Assume the initial guess $u_0$ be chosen such that $ ( \lambda_{+}^{0}  u_{0}^{+} -  \lambda_{-}^{0}  u_{0}^{-} )  \perp w_1$ then the Algorithm generates sequence ${\{u_{n}}\}$ which are orthogonal to $w_1$. Multiply the equation
\[
- \Delta u =     \lambda_{+}^{0}(\Omega_{+}) u_{+}^{0}   -\lambda_{-}^{0}(\Omega_{-}) u_{-}^{0}
\]
by $w_1$ and integrating by parts  two times on left side, implies
\[
\int_{\Omega}   u(  \Delta w_1) \, dx=   \int_{\Omega} (  \lambda_{+}^{0}   u_{+}^{0}   -\lambda_{-}^{0} u_{-}^{0})  w_1 \, dx=0.\]
This shows
\[
\int_{\Omega}   u  \,  w_1 \, dx=0.
\]
\end{rmk}

\subsection{Neumann case}

In this part,  we consider  the $p$-Laplace  eigenvalue problem  with  Neumann  boundary.
\begin{equation}\label{evproblem}
\begin{cases}
-\Delta_p u=\lambda|u|^{p-2}u  \quad&  \text{in }\Omega,\\
|\nabla u|^{p-2}\nabla u\cdot\nu=0 \quad&   \text{on }\partial\Omega,
\end{cases}
\end{equation}
where  $ 1< p <\infty$  and    $\nu$ is the unit normal vector to $\partial\Omega$.

A  non zero function  $u\in W^{1, p} (\Omega)$ is called a $p$-eigenfunction   of (\ref{evproblem}) in the weak sense, if there exist a $\lambda \geq 0$
such that
\begin{equation}\label{weak_evproblem}
\int_{\Omega}|\nabla u|^{p-2} \nabla u \cdot \nabla\phi\, dx=\lambda \int_{\Omega}| u |^{p-2}  u \, \phi \, dx,  \quad   \, \phi \in W^{1,p}(\Omega).
\end{equation}
The number $\lambda$ is called a $p$-eigenvalue.  The  first eigenfunction is a constant function and $\lambda_1=0$.
Note that by testing \eqref{weak_evproblem} with $\phi=1$ we obtain that any eigenfunction with $\lambda>0$ necessarily fulfills
\begin{equation}\label{eq:zero_p-mean}
    \int_\Omega |u|^{p-2}u\, dx = 0.
\end{equation}
Equivalently,
\[
\| u_{+} \|_{L^{p-1}(\Omega)}=\| u_{-} \|_{L^{p-1}(\Omega)}.
\]
This motivates the following definition:

The {variational second $p$-eigenvalue} is defined as

\begin{equation}\label{second_eigenvalue}
\lambda_{2,p}(\Omega):=\inf\left\lbrace\frac{\int_{\Omega} |\nabla u(x)|^{p}\, dx }{\int_{\Omega}| u(x)|^{p}\, dx}\st \,\int_\Omega |u|^{p-2}u=0\right\rbrace.
 \end{equation}
Any $u\in W^{1,p}(\Omega)\setminus\{0\}$ realizing the infimum is  called {variational second $p$-eigenfunction}.

\begin{defn}
The $p$-mean of a function $u\in L^p(\Omega)$ is defined as
\begin{equation}\label{eq:pmean}
    \mean_p(u):=\frac{1}{|\Omega|}\int_\Omega|u|^{p-2}u\dx.
\end{equation}
\end{defn}

\begin{rem}
     The following  $p$-Laplace  problem
    \begin{equation}\label{eq:pPoisson}
        \begin{cases}
        -\Delta_p u=f \quad&\text{on }\Omega,\\
        |\nabla u|^{p-2}\nabla u\cdot\nu=0  \quad&\text{on }\partial\Omega,
        \end{cases}
    \end{equation}
    has a one parameter family of solutions; adding any  constant to a solution will be a solution.  Furthermore, the datum $f$ has to admit the \emph{compatibility condition} $\mean_2(f) = 0$.
\end{rem}

 The algorithm for Neumann case is  as following.

 {\scriptsize{
\begin{algorithm}[H]
	\SetAlgoLined

	\SetKwInOut{Input}{inputs}
	\SetKwInOut{Output}{output}
	\SetKwInOut{Required}{requied}
	
	\Indm
	\Input{$u^{0}=u_{+}^{0}- u_{-}^{0}, \epsilon$.}
 \Output{Approximation of second eigenvalue and second eigenfunction.}
	\Indp
	\BlankLine

	\While{$|\lambda_{1}^{k+1}(\Omega_{+})-\lambda_{1}^{k}(\Omega_{+})| \ge \epsilon,\,  \& \, |\lambda_{1}^{k+1}(\Omega_{-})-\lambda_{1}^{k}(\Omega_{-})| \ge \epsilon$ }{
\[
\]

 \begin{enumerate}
 \item
    Set $ k=0$.  Initialize with   arbitrary guess $u^{(0)}\in W^{1,p}(\Omega)$ with $\mean_p(u^{(0)})=0$.

	\item  For $k\geq 0$, we set $u_\pm^{k}:=\max(\pm u^{k)},0)$ and define
\begin{align*}
    \lambda_\pm^{k}&:=\frac{\int_\Omega|\nabla u_\pm^{k}|^p\dx}{\int_\Omega|u_\pm^{k}|^p\dx},\\
        f^{k}&:= \lambda_+^{k} (u_+^{k})^{p-1} -\lambda_-^{k}(u_-^{k})^{p-1}.
\end{align*}

\item \item Next define $u^{(k+1)}$ as the unique   solution to the problem

  \begin{equation}\label{p13}
  \left \{
\begin{array}{lll}
-\Delta_p u = f^{k}      &  \text{in }\Omega\\
|\nabla u|^{p-2}\nabla u\cdot\nu=0 &\text{on }\partial\Omega\\
\mean_p(u)=0.
  \end{array}
\right.
\end{equation}

\item  Set  $k=k+1$  and go back to (2).

 \end{enumerate}
	}
	\caption{Second eigenvalue for Neumann boundary }
	\label{Algorithm 2 )}
\end{algorithm}
}}

\begin{rem}
To  enforce the condition $\mean_p(u)=0$ numerically in  (\ref{p13}), we  fix  the value of the solution at a single node of the grid to an arbitrary value, which yields    a unique solution $\tilde{u}$. Then we set $u:=\tilde{u}-c$ where $c$ is such that $\mean_p(u)=0$ holds.
\end{rem}

\subsection{Convergence of the Algorithm}

  In the sequel,  for any $v\in W_{0}^{1,p}(\Omega)$  by $\tilde{v}$  we mean the normalized in $L^p(\Omega)$
$$
\tilde{v}=\frac{v}{\|v\|_{L^p(\Omega)}}.
$$
%
Furthermore, we define  $u^{k}_{+}, u^{k}_{-} \in W^{1,p}_0(\Omega)$ as  the  positive and negative parts of the solution of the following Dirichlet problem  inductively,
\begin{equation}\label{fa}
\left \{
\begin{array}{ll}
-\Delta_{p}  u^{k}=   \lambda_{+}^{k-1} \, (\tilde{u}_{+}^{k-1})^{p-1} - \lambda_{-}^{k-1}   \, (\tilde{u}_{-}^{k-1})^{p-1}   &    \textrm{ in } \Omega,\\
 u^{k} =0      &    \textrm{ on }      \partial  \, \Omega.
  \end{array}
\right.
\end{equation}
Here $\tilde{u}^{k-1}=\tilde{u}_{+}^{k-1}- \tilde{u}_{-}^{k-1}.$

\begin{prop}\label{sun200} With the  introduced  notations,    the following facts hold:
 \begin{align*}
  & |\tilde{u}^{k-1}|= |\tilde{u}_{+}^{k-1}- \tilde{u}_{-}^{k-1} |=  \tilde{u}_{+}^{k-1}+  \tilde{u}_{-}^{k-1},\\
  &   |\tilde{u}^{k-1}|^{p-2} =  (\tilde{u}_{+}^{k-1})^{p-2} + ( \tilde{u}_{-}^{k-1})^{p-2},\\
 &  \|\lambda_{+}^{k-1}     \tilde{u}_{+}^{k-1} -\lambda_{-}^{k-1}     \tilde{u}_{-}^{k-1}\|^{p}_{L^p} =(\lambda_{+}^{k-1} )^{p} + (\lambda_{-}^{k-1} )^{p},\\
 & \| \lambda_{+}^{k-1}   \nabla  \tilde{u}_{+}^{k-1} -\lambda_{-}^{k-1}    \nabla \tilde{u}_{-}^{k-1}\|^{p}_{L^p}=(\lambda_{+}^{k-1} )^{p+1} + (\lambda_{-}^{k-1} )^{p+1}.
\end{align*}
\end{prop}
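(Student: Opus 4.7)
The plan is to observe that all four identities are elementary pointwise or integration-by-parts consequences of two facts: (a) $\tilde{u}_+^{k-1}$ and $\tilde{u}_-^{k-1}$ have disjoint supports by construction, and (b) they are individually normalized so that $\|\tilde{u}_\pm^{k-1}\|_{L^p(\Omega)}=1$, together with the relation $\lambda_\pm^{k-1}=\int_\Omega |\nabla \tilde{u}_\pm^{k-1}|^p\dx$ coming from (\ref{sun5}).

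First I would establish the pointwise identities (i) and (ii). For (i), at any $x\in\Omega$ at least one of $\tilde{u}_\pm^{k-1}(x)$ vanishes; in each case both sides equal the nonvanishing term (or $0$). For (ii), with the natural convention $0^{p-2}\cdot 0 = 0$ (which is how the expression is actually used, since it only ever appears multiplied against $\tilde{u}_+$ or $\tilde{u}_-$ in (\ref{sunshine86})), the same case analysis gives the identity: on $\{\tilde{u}_+>0\}$ we have $|\tilde{u}|=\tilde{u}_+$ and $\tilde{u}_-=0$, so both sides reduce to $(\tilde{u}_+)^{p-2}$, and similarly on $\{\tilde{u}_->0\}$.

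Next, for (iii), I would split $\Omega$ into the two supports and the common zero set. On $\text{supp}(\tilde{u}_+^{k-1})$ we have $|\lambda_+^{k-1}\tilde{u}_+^{k-1}-\lambda_-^{k-1}\tilde{u}_-^{k-1}|^p=(\lambda_+^{k-1})^p(\tilde{u}_+^{k-1})^p$, whose integral equals $(\lambda_+^{k-1})^p$ by normalization; a symmetric computation handles the negative part, yielding the claim.

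Finally, for (iv) the extra ingredient is that the gradients $\nabla \tilde{u}_+^{k-1}$ and $\nabla \tilde{u}_-^{k-1}$ also have (essentially) disjoint supports: this follows from the Stampacchia-type fact that a Sobolev function has vanishing gradient a.e.\ on its zero level set, applied to $\tilde{u}_+^{k-1}$ (which vanishes on $\text{supp}(\tilde{u}_-^{k-1})$) and conversely. Given this, the same splitting argument as in (iii) reduces the computation to
\begin{equation*}
\int_\Omega(\lambda_+^{k-1})^p|\nabla \tilde{u}_+^{k-1}|^p\dx+\int_\Omega(\lambda_-^{k-1})^p|\nabla \tilde{u}_-^{k-1}|^p\dx,
\end{equation*}
and then $\int_\Omega|\nabla \tilde{u}_\pm^{k-1}|^p\dx=\lambda_\pm^{k-1}$ by definition (\ref{sun5}), giving $(\lambda_+^{k-1})^{p+1}+(\lambda_-^{k-1})^{p+1}$. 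The only mildly non-trivial step is invoking the disjointness of the supports of the gradients; everything else is bookkeeping with the normalization.
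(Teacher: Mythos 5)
Your proof is correct, and in fact the paper states Proposition \ref{sun200} without any proof at all, so your argument supplies exactly the reasoning the paper leaves implicit: disjointness of the supports of $\tilde{u}_{+}^{k-1}$ and $\tilde{u}_{-}^{k-1}$, the $L^p$-normalization, the identity $\int_{\Omega}|\nabla \tilde{u}_{\pm}^{k-1}|^{p}\dx=\lambda_{\pm}^{k-1}$, and the Stampacchia-type fact that $\nabla \tilde{u}_{\pm}^{k-1}=0$ a.e.\ where $\tilde{u}_{\pm}^{k-1}=0$, which gives the essential disjointness of the gradient supports needed for the fourth identity. Your remark on the second identity is also the right one to make: for $1<p<2$ it only holds under the convention $0^{p-2}=0$, which is harmless since the expression is only ever used multiplied by $\tilde{u}_{\pm}^{k-1}$.
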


The next Lemma shows the monotonicity of sequence of approximations of second eigenvalues as $p$ tends to one.

\begin{lem}
Let $\lambda_{+}^{k}(\Omega_+)$ and $  \lambda_{-}^{k}(\Omega_-)$ be  obtained by Algorithm 1. Then as $p  \rightarrow 1$ the following holds

\[
\max \left( \lambda_{+}^{k}(\Omega_{+}),  \lambda_{-}^{k}(\Omega_{-})\right)       \le \max \left( \lambda_{+}^{k-1}(\Omega_{+}),  \lambda_{-}^{k-1}(\Omega_{-})\right),
\]
for every $k\ge 1$.
\end{lem}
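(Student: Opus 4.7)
The plan is to test the weak form of \eqref{fa} against the normalized positive and negative parts of the current iterate $u^k$, producing an $a^{p-1}$-weighted inequality, and then to pass to the limit $p\to 1^+$ to absorb the weights. Throughout set $v:=u^k$, $a:=\|u_+^k\|_{L^p(\Omega)}$, $b:=\|u_-^k\|_{L^p(\Omega)}$, so that $\tilde u_+^k=u_+^k/a$ and $\tilde u_-^k=u_-^k/b$ have unit $L^p$-norm and $\lambda_\pm^k=\int_\Omega|\nabla\tilde u_\pm^k|^p\dx$ by (\ref{sun5}).

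For the main computation, I would first test the weak formulation of \eqref{fa} with $\phi=\tilde u_+^k$. Using the pointwise identity $|\nabla v|^{p-2}\nabla v\cdot\nabla u_+^k=|\nabla u_+^k|^p$ a.e.\ (which holds because $\nabla u_+^k=\nabla v\,\mathbf 1_{\{v>0\}}$ a.e.), the left-hand side equals $a^{-1}\int_\Omega|\nabla u_+^k|^p\dx=a^{p-1}\lambda_+^k$. On the right-hand side, the term containing $\tilde u_-^{k-1}$ enters with a minus sign and has a nonnegative integrand (both $\tilde u_-^{k-1}$ and $\tilde u_+^k$ are nonnegative), so it is nonpositive; the remaining term is estimated by H\"older's inequality with conjugate exponents $p/(p-1)$ and $p$, giving
\[
\lambda_+^{k-1}\int_\Omega(\tilde u_+^{k-1})^{p-1}\tilde u_+^k\dx\le\lambda_+^{k-1}\|\tilde u_+^{k-1}\|_{L^p}^{p-1}\|\tilde u_+^k\|_{L^p}=\lambda_+^{k-1}.
\]
Combining the two sides yields $a^{p-1}\lambda_+^k\le\lambda_+^{k-1}$, and the analogous test with $\phi=-\tilde u_-^k$ produces $b^{p-1}\lambda_-^k\le\lambda_-^{k-1}$.

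Since $t^{p-1}\to 1$ as $p\to 1^+$ for any fixed $t>0$, both $a^{p-1}\to 1$ and $b^{p-1}\to 1$ as long as $a$ and $b$ stay bounded away from $0$ and $\infty$. Passing to the limit in the two inequalities above then gives $\lambda_+^k\le\lambda_+^{k-1}$ and $\lambda_-^k\le\lambda_-^{k-1}$, which immediately implies the bound on the maximum. The main obstacle is thus the uniform boundedness of $a$ and $b$; one can get this from comparing the convex energy $\phi\mapsto\tfrac1p\int|\nabla\phi|^p\dx-\int f\phi\dx$ (with $f$ the right-hand side of (\ref{fa})) at $v$ and at $\tilde u^{k-1}$, which forces $\int_\Omega|\nabla v|^p\dx\ge\lambda_+^{k-1}+\lambda_-^{k-1}$, and combining this with the inequalities $a^p\lambda_+^k\le a\lambda_+^{k-1}$ and $b^p\lambda_-^k\le b\lambda_-^{k-1}$ already derived to rule out $a\to 0$ or $b\to 0$ as $p\to 1^+$.
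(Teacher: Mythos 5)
Your argument is correct and rests on the same mechanism as the paper's proof: test the weak form of (\ref{fa}) against the positive and negative parts of $u^{k}$, use $|\nabla u^{k}|^{p-2}\nabla u^{k}\cdot\nabla u^{k}_{+}=|\nabla u^{k}_{+}|^{p}$, apply H\"older, and let $p\to 1^{+}$. The treatment of the right-hand side, however, is genuinely different. The paper keeps the whole datum $\lambda_{+}^{k-1}(\tilde{u}_{+}^{k-1})^{p-1}-\lambda_{-}^{k-1}(\tilde{u}_{-}^{k-1})^{p-1}$ inside one $L^{p/(p-1)}$ norm, evaluates it exactly via Proposition \ref{sun200} as $\bigl[(\lambda_{+}^{k-1})^{p/(p-1)}+(\lambda_{-}^{k-1})^{p/(p-1)}\bigr]^{(p-1)/p}$, and recovers the maximum in the limit from $\lim_{\alpha\to\infty}(a^{\alpha}+b^{\alpha})^{1/\alpha}=\max(a,b)$. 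You instead discard the cross term (nonpositive when tested against $\tilde{u}_{+}^{k}$) and apply H\"older to each remaining term separately, which buys the componentwise bounds $a^{p-1}\lambda_{+}^{k}\le\lambda_{+}^{k-1}$ and $b^{p-1}\lambda_{-}^{k}\le\lambda_{-}^{k-1}$ and hence, in the limit, the stronger conclusion $\lambda_{\pm}^{k}\le\lambda_{\pm}^{k-1}$ rather than only monotonicity of the maximum. Both proofs carry the same residual caveat: the normalization factors $\|u_{\pm}^{k}\|_{L^{p}}^{p-1}$ must tend to $1$ (in the paper these are hidden in the quotients $\|\nabla u_{\pm}^{k}\|_{L^{p}}^{p}/\|u_{\pm}^{k}\|_{L^{p}}$, which equal $\lambda_{\pm}^{k}\,\|u_{\pm}^{k}\|_{L^{p}}^{p-1}$ and not $\lambda_{\pm}^{k}$), so one needs $\|u_{\pm}^{k}\|_{L^{p}}$ bounded away from $0$ and $\infty$ uniformly as $p\to 1^{+}$. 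You flag this explicitly, which the paper does not; but note that your energy-comparison sketch only yields $\max(a,b)\ge 1$, since $\lambda_{+}^{k-1}+\lambda_{-}^{k-1}\le\int_{\Omega}|\nabla u^{k}|^{p}\,dx=a^{p}\lambda_{+}^{k}+b^{p}\lambda_{-}^{k}\le a\,\lambda_{+}^{k-1}+b\,\lambda_{-}^{k-1}$, and this does not exclude that the smaller of the two norms degenerates. A separate lower bound on each nodal part would be needed to close that step rigorously.
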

\begin{proof}

To start,   multiply  the first equation (\ref{fa}) by $ u^{k}_{+}  $ and integrate over $\Omega$ to deduce
\begin{equation} \label{eq1}
\begin{split}
\int_{\Omega} |\nabla  u^{k}_{+}  |^{p}  \, dx   & = \int_{\Omega}   u^{k}_{+}  \, |\tilde{u}^{k-1}|^{p-2}
    \bigg[ \lambda_{+}^{k-1}    \tilde{u}_{+}^{k-1} -\lambda_{-}^{k-1}     \tilde{u}_{-}^{k-1} \bigg]\, dx  \\
 & =  \int_{\Omega}   u^{k}_{+}  \,   \bigg[ \lambda_{+}^{k-1}   ( \tilde{u}_{+}^{k-1} )^{p-1}  -\lambda_{-}^{k-1}  (\tilde{u}_{-}^{k-1})^{p-1}\, \bigg]\, dx.
\end{split}
\end{equation}
H\"{o}lder inequality  on right hand side gives
\[
\int_{\Omega} |\nabla  u^{k}_{+}  |^{p}  \, dx    \le \| u^{k}_{+}\|_{L^p} \|\lambda_{+}^{k-1}
   ( \tilde{u}_{+}^{k-1} )^{p-1}  -\lambda_{-}^{k-1}    ( \tilde{u}_{-}^{k-1})^{p-1}\|_{L^{\frac{p}{p-1}}}.
\]
By Proposition (\ref{sun200}) we obtain
 \begin{equation} \label{sun22}
\|\nabla  u^{k}_{+} \|^{p}_{L^{p}} \le  \| u^{k}_{+}\|_{L^p} \bigg[(\lambda_{+}^{k-1} )^{\frac{p}{p-1}} + (\lambda_{-}^{k-1} )^{\frac{p}{p-1}} \bigg]^{\frac{p-1}{p}}.
\end{equation}
The same argument as above indicates
 \begin{equation} \label{sun23}
\|\nabla  u^{k}_{-}  \|^{p}_{L^{p}} \le  \| u^{k}_{-} \|_{L^p} \bigg[(\lambda_{+}^{k-1} )^{\frac{p}{p-1}} + (\lambda_{-}^{k-1} )^{\frac{p}{p-1}} \bigg]^{\frac{p-1}{p}}.
\end{equation}
The inequalities (\ref{sun22})  and (\ref{sun23}) imply
\[
\max \left(\frac{\|\nabla  u^{k}_{+}  \|^{p}_{L^{p}} }{\| u^{k}_{+}\|_{L^p}} , \frac{\|\nabla  u^{k}_{-}  \|^{p}_{L^{p}} }{\| u^{k}_{-}\|_{L^p}}\right)    \le   \bigg[(\lambda_{+}^{k-1} )^{\frac{p}{p-1}} + (\lambda_{-}^{k-1} )^{\frac{p}{p-1}} \bigg]^{\frac{p-1}{p}}.
\]
Let $p \rightarrow 1^{+}$ and using the fact
\[
 \underset{\alpha \rightarrow \infty}{\lim} ( a^{\alpha} + b^{\alpha})^{\frac{1}{\alpha}}=\max(a, b),
\]
complete the proof.
\end{proof}

\begin{lem}
With same assumptions as before the sequence $u^{k}$  is  bounded from below for every $p\ge 1.$
 \end{lem}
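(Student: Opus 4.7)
The plan is to prove the uniform lower bound
$$\max\bigl(\lambda_+^k(\Omega_+),\,\lambda_-^k(\Omega_-)\bigr)\;\ge\;\lambda_2(\Omega)\qquad\text{for every }k\ge 0\text{ and every }p\ge 1,$$
so that the sequence of eigenvalue approximations extracted from the iterates $u^k$ of Algorithm 1 is uniformly bounded below. Coupled with the monotonicity established in the preceding lemma (in the regime $p\to 1$), this forces the scheme to converge to a limit that is no smaller than $\lambda_2(\Omega)$.

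The first step is to relate $\lambda_\pm^k$ to the first Dirichlet eigenvalue on the nodal support. At every iteration the functions $u_+^k$ and $u_-^k$ lie in $W^{1,p}_0(\Omega)$, are non-negative, and have disjoint supports, which I denote by $\Omega_+^k$ and $\Omega_-^k$. Extending $u_\pm^k$ by zero makes it an admissible Rayleigh test function on $\Omega_\pm^k$, so that
$$\lambda_\pm^k\;=\;\frac{\int_\Omega|\nabla u_\pm^k|^p\,dx}{\int_\Omega|u_\pm^k|^p\,dx}\;\ge\;\lambda_1(\Omega_\pm^k).$$
Next I would invoke the variational characterization $\lambda_2(\Omega)=\mathfrak{L}_2(\Omega)$ from Lemma \ref{f5}. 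Absorbing, if necessary, the (at most) null set $\{u^k=0\}$ into one of the two sides, one extends $(\Omega_+^k,\Omega_-^k)$ to an admissible bi-partition $(\widehat\Omega_+,\widehat\Omega_-)\in\mathfrak{D}_2$ with $\Omega_\pm^k\subseteq\widehat\Omega_\pm$. Domain monotonicity of the first Dirichlet eigenvalue then yields $\lambda_1(\widehat\Omega_\pm)\le\lambda_1(\Omega_\pm^k)$, and the definition \eqref{f15} of $\mathfrak{L}_2(\Omega)$ produces the chain
$$\lambda_2(\Omega)\;\le\;\max\bigl(\lambda_1(\widehat\Omega_+),\lambda_1(\widehat\Omega_-)\bigr)\;\le\;\max\bigl(\lambda_1(\Omega_+^k),\lambda_1(\Omega_-^k)\bigr)\;\le\;\max(\lambda_+^k,\lambda_-^k),$$
which is the desired bound, uniform in $k$ and in $p$ since the right-hand side depends only on $\Omega$ and the exponent.

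The main technical obstacle is verifying admissibility of $(\widehat\Omega_+,\widehat\Omega_-)$ in $\mathfrak{D}_2$, that is, openness, disjointness, connectedness and the covering relation. Openness and disjointness follow from applying the strong maximum principle for the $p$-Laplacian to $\pm u^k$ on each connected component of the nodal sets, while the covering relation uses that the zero set of a non-trivial weak solution of the inhomogeneous $p$-Poisson problem \eqref{fa} has Lebesgue measure zero. Connectedness is the most delicate clause: if a nodal set splits into several components, one can either restrict to the connected component on which $u_\pm^k$ is not identically zero, or observe that the infimum defining $\mathfrak{L}_2(\Omega)$ is unchanged if the connectedness requirement in the definition of $\mathfrak{D}_2$ is dropped, since any extra component with larger first eigenvalue can be annexed to its neighbour without increasing the maximum. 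Once this admissibility is secured, the chain of inequalities above closes the argument.
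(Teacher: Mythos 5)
You have proved a true and relevant inequality, but not the one this lemma asserts. As the paper's own proof makes clear (it culminates in inequality \eqref{sun42}, namely $1\le 2^{(p-1)/p}\,\|u^k\|_{L^p}$), the phrase ``the sequence $u^k$ is bounded from below'' refers to the $L^p$-norms of the un-normalized iterates $u^k$ solving \eqref{fa} staying bounded away from zero uniformly in $k$ --- a nondegeneracy statement needed because the algorithm renormalizes $u^k_{\pm}$ at every step and one must rule out the solution of the Poisson-type problem collapsing to zero. Your argument instead establishes $\max(\lambda_+^k,\lambda_-^k)\ge\lambda_2(\Omega)$ via the optimal-partition characterization \eqref{f15} and Lemma \ref{f5}. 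That is a correct and genuinely useful companion to the monotonicity lemma (it shows the decreasing sequence of eigenvalue approximations cannot drop below $\lambda_2(\Omega)$), and your handling of the admissibility of the nodal partition is sensible; but it says nothing about $\|u^k\|_{L^p}$, so the claim actually being made here is left unproved.

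The paper's route is short and entirely different from yours: test \eqref{fa} with $u^k$ and apply H\"older together with Proposition \ref{sun200} to get $\|\nabla u^k\|_{L^p}^p\le\|u^k\|_{L^p}\bigl[(\lambda_+^{k-1})^{p/(p-1)}+(\lambda_-^{k-1})^{p/(p-1)}\bigr]^{(p-1)/p}$; then test \eqref{fa} with $\tilde u_{\pm}^{k-1}$, using the disjoint supports, the normalization $\|\tilde u_{\pm}^{k-1}\|_{L^p}=1$ and $\|\nabla\tilde u_{\pm}^{k-1}\|_{L^p}=(\lambda_{\pm}^{k-1})^{1/p}$, to obtain $(\lambda_{\pm}^{k-1})^{(p-1)/p}\le\|\nabla u^k\|_{L^p}^{p-1}$, equivalently $\lambda_{\pm}^{k-1}\le\|\nabla u^k\|_{L^p}^{p}$. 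Substituting this into the bracket of the first estimate makes the factor $\|\nabla u^k\|_{L^p}^{p}$ cancel on both sides and leaves $1\le 2^{(p-1)/p}\|u^k\|_{L^p}$, which is the uniform lower bound intended by the statement. If you want to keep your inequality $\max(\lambda_+^k,\lambda_-^k)\ge\lambda_2(\Omega)$, it belongs alongside the monotonicity lemma as a separate observation, not as a proof of this one.
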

\begin{proof}
Multiply   equation (\ref{fa}) by $ u^{k},$  integrate over $\Omega$  and  H\"{o}lder inequality  give
\begin{equation}\label{f21}
\int_{\Omega} |\nabla  u^{k}   |^{p}  \, dx    \le \| u^{k} \|_{L^p} \|\lambda_{+}^{k-1}
   ( \tilde{u}_{+}^{k-1} )^{p-1}  -\lambda_{-}^{k-1}    ( \tilde{u}_{-}^{k-1})^{p-1}\|_{L^{\frac{p}{p-1}}}.
\end{equation}

\begin{equation*}
 \|\nabla  u^{k} \|^{p}_{L^{p}}   \le \| u^{k}\|_{L^p} \bigg[(\lambda_{1}^{k-1} )^{\frac{p}{p-1}} + (\lambda_{-}^{k-1} )^{\frac{p}{p-1}} \bigg]^{\frac{p-1}{p}}.
\end{equation*}

Note that $u^{k}$ satisfies
\[
\lambda_{2} \| u^{k}\|^{p}_{L^p} \le \|\nabla  u^{k} \|^{p}_{L^{p}}.
\]
This shows
\[
\lambda_{2} \| u^{k}\|^{p}_{L^p}
\le \| u^{k}\|_{L^p} \bigg[(\lambda_{1}^{k-1} )^{\frac{p}{p-1}} + (\lambda_{-}^{k-1} )^{\frac{p}{p-1}} \bigg]^{\frac{p-1}{p}},
\]
which implies
  \begin{equation} \label{sun32}
 \| u^{k}\|_{L^p} \le \frac{1}{\lambda_{2}}  \bigg[(\lambda_{1}^{k-1} )^{\frac{p}{p-1}} + (\lambda_{-}^{k-1} )^{\frac{p}{p-1}}  \bigg]^{\frac{1}{p}}.
 \end{equation}
Multiply (\ref{fa}) by $ \tilde{u}_{+}^{k-1}$ and integrate over $\Omega$
to deduce
$$
\int_{\Omega}   |\nabla u^{k}|^{p-2} \nabla u^{k} \cdot \nabla \tilde{u}_{+}^{k-1} \, dx    =
\lambda_{+}^{k-1}.
$$
Here we used the facts that
\[
 \tilde{u}_{+}^{k-1} \cdot\tilde{u}_{-}^{k-1}   =0,  \quad \quad \int_{\Omega} ( \tilde{u}_{+}^{k-1})^p  \, dx=1.
 \]
Form here we get
\[
\lambda_{+}^{k-1} \le \|\nabla u^{k}\|_{L^p}^{p-1}     \|\nabla \tilde{u}_{+}^{k-1}\|_{L^p},
\]
the same argument shows
\[
\lambda_{-}^{k-1} \le \|\nabla u^{k}\|_{L^p}^{p-1}     \|\nabla \tilde{u}_{-}^{k-1}\|_{L^p},
\]
Considering   $\|\nabla \tilde{u}_{+}^{k-1}\|_{L^p}= (\lambda_{+}^{k-1})^{\frac{1}{p}}$  and   $\|\nabla \tilde{u}_{-}^{k-1}\|_{L^p}= (\lambda_{-}^{k-1})^{\frac{1}{p}}$  implies

\begin{equation}\label{sun24}
\left \{
\begin{array}{ll}
 (\lambda_{+}^{k-1})^{\frac{p-1}{p}} \le  \|\nabla u^{k}\|_{L^p}^{p-1},   &  \\\\
 (\lambda_{-}^{k-1})^{\frac{p-1}{p}} \le  \|\nabla u^{k}\|_{L^p}^{p-1}.   &
  \end{array}
\right.
\end{equation}
 Inserting the inequalities (\ref{sun24}) in  (\ref{f21}) yields
 \begin{equation}\label{sun42}
 1 \le   2^{\frac{p-1}{p}} \, \| u^{k}\|_{L^p}.
\end{equation}
Also from inequalities (\ref{sun24}) and   (\ref{sun22}) it follows

\begin{equation}\label{sun27}
\left \{
\begin{array}{ll}
\|\nabla u^{k}_{+}\|_{L^p}^{p} \le 2^{\frac{p-1}{p}} \, \| u^{k}_{+}\|_{L^p} \|\nabla u^{k} \|_{L^p}^{p},\\\\
\|\nabla u^{k}_{-}\|_{L^p}^{p} \le 2^{\frac{p-1}{p}} \,  \| u^{k}_{-}\|_{L^p} \|\nabla u^{k}\|_{L^p}^{p}.
 \end{array}
\right.
\end{equation}
\end{proof}

\begin{lem}
  For every $p>1$ the following holds
\[
\lambda^{k}      \le \max \left( \lambda_{+}^{k-1}(\Omega_{+}),  \lambda_{-}^{k-1}(\Omega_{-})\right),
\]
where
\[
\lambda^{k} = \frac{\int_{\Omega}  | \nabla u^{k}|^{p} \, dx   }{\int_{\Omega}  | u^{k}|^{p} \, dx}.
\]
\end{lem}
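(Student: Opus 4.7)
My plan is to sandwich $\|\nabla u^{k}\|_{L^{p}}^{p}$ between two H\"older estimates obtained by testing (\ref{fa}) against two different functions, extract from this a lower bound on $\|u^{k}\|_{L^{p}}$, and feed it back into the upper bound. Set
\[
M := \max\bigl(\lambda_{+}^{k-1},\lambda_{-}^{k-1}\bigr), \qquad C := \bigl[(\lambda_{+}^{k-1})^{p/(p-1)} + (\lambda_{-}^{k-1})^{p/(p-1)}\bigr]^{(p-1)/p}.
\]

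The upper bound is essentially what (\ref{f21}) delivers: testing (\ref{fa}) against $u^{k}$, applying H\"older with conjugate exponents $p$ and $p/(p-1)$, and using Proposition \ref{sun200} to identify the $L^{p/(p-1)}$-norm of the right-hand side as $C$, one obtains $\|\nabla u^{k}\|_{L^{p}}^{p} \le C\,\|u^{k}\|_{L^{p}}$. For the complementary lower bound I will test (\ref{fa}) against $\tilde{u}^{k-1} = \tilde{u}_{+}^{k-1} - \tilde{u}_{-}^{k-1}$. Since $\tilde{u}_{+}^{k-1}$ and $\tilde{u}_{-}^{k-1}$ have disjoint supports and unit $L^{p}$-norm, the right-hand side collapses to $\lambda_{+}^{k-1} + \lambda_{-}^{k-1}$ (the cross terms vanish), while H\"older on the left together with the identity $\|\nabla\tilde{u}^{k-1}\|_{L^{p}}^{p} = \lambda_{+}^{k-1} + \lambda_{-}^{k-1}$ (again by disjointness of supports) produces $\lambda_{+}^{k-1} + \lambda_{-}^{k-1} \le \|\nabla u^{k}\|_{L^{p}}^{p}$. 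Chaining the two estimates yields
\[
\|u^{k}\|_{L^{p}} \ge \frac{\lambda_{+}^{k-1} + \lambda_{-}^{k-1}}{C}.
\]

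The remainder is pure algebra, and it is the only delicate step. Since $\lambda^{k}\|u^{k}\|_{L^{p}}^{p} = \|\nabla u^{k}\|_{L^{p}}^{p} \le C\,\|u^{k}\|_{L^{p}}$, the target $\lambda^{k}\le M$ is equivalent to $\|u^{k}\|_{L^{p}}^{p-1}\ge C/M$. Writing $r := \min(\lambda_{+}^{k-1},\lambda_{-}^{k-1})/M \in [0,1]$, one computes $C = M\,(1+r^{p/(p-1)})^{(p-1)/p}$ and $(\lambda_{+}^{k-1}+\lambda_{-}^{k-1})/C = (1+r)/(1+r^{p/(p-1)})^{(p-1)/p}$, so after raising to the $(p-1)$st power everything reduces to $(1+r)^{p-1}\ge (1+r^{p/(p-1)})^{p-1}$, i.e., to $r\ge r^{p/(p-1)}$. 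This last inequality holds on $[0,1]$ precisely because $p/(p-1)>1$ whenever $p>1$, which is exactly the hypothesis of the lemma.
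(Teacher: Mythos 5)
Your proof is correct, but it closes the argument by a different route than the paper. Both proofs start from the same Hölder estimate obtained by testing \eqref{fa} against $u^{k}$, namely $\|\nabla u^{k}\|_{L^p}^{p}\le C\,\|u^{k}\|_{L^p}$ with $C=[(\lambda_{+}^{k-1})^{p/(p-1)}+(\lambda_{-}^{k-1})^{p/(p-1)}]^{(p-1)/p}$, and both exploit the disjoint-support identities of Proposition \ref{sun200}. The paper then sharpens this single estimate from within: it rewrites $\|f^{k-1}\|_{L^{p/(p-1)}}^{p/(p-1)}$ as the pairing of $-\Delta_p u^{k}$ with the $\lambda$-weighted combination $(\lambda_{+}^{k-1})^{1/(p-1)}\tilde{u}_{+}^{k-1}-(\lambda_{-}^{k-1})^{1/(p-1)}\tilde{u}_{-}^{k-1}$, applies Hölder once more, and lands on the weighted-mean bound $\lambda^{k}\le\bigl((\lambda_{+}^{k-1})^{q+1}+(\lambda_{-}^{k-1})^{q+1}\bigr)/\bigl((\lambda_{+}^{k-1})^{q}+(\lambda_{-}^{k-1})^{q}\bigr)$ with $q=p/(p-1)$, which is at most the maximum because it is a weighted average. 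You instead pair the equation with the unweighted difference $\tilde{u}_{+}^{k-1}-\tilde{u}_{-}^{k-1}$ to get the complementary lower bound $\lambda_{+}^{k-1}+\lambda_{-}^{k-1}\le\|\nabla u^{k}\|_{L^p}^{p}$ (note that the weaker bounds \eqref{sun24} of the preceding lemma, which control only the maximum rather than the sum, would not suffice here), convert it into a lower bound on $\|u^{k}\|_{L^p}$, and reduce everything to $r\ge r^{p/(p-1)}$ on $[0,1]$. The two closings are genuinely different pieces of elementary algebra; the paper's has the advantage of exhibiting $\lambda^{k}$ directly as bounded by a weighted average of $\lambda_{\pm}^{k-1}$ (which is slightly more informative than the bare maximum), while yours avoids the weighted test function and the somewhat delicate norm manipulation in \eqref{fsun1}, at the cost of needing the extra lower-bound estimate. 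One cosmetic remark: the reduction of the target to $\|u^{k}\|_{L^p}^{p-1}\ge C/M$ is a sufficient condition, not an equivalence, but this does not affect the validity of the argument.
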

\begin{proof}
We rewrite the right hand side of (\ref{f21}) as
\begin{equation}\label{fsun1}
\|\nabla  u^{k}\|^{p}_{L^{p}} \le
\| u^{k}\|_{L^p} \frac{\bigg\|\lambda_{+}^{k-1}   ( \tilde{u}_{+}^{k-1})^{p-1}  -\lambda_{-}^{k-1}    (\tilde{u}_{-}^{k-1})^{p-1}\bigg\|^{\frac{p}{p-1}}_{L^{\frac{p}{p-1}}}}
{ \bigg\|\lambda_{+}^{k-1}   ( \tilde{u}_{+}^{k-1})^{p-1}  -\lambda_{-}^{k-1}     (\tilde{u}_{-}^{k-1})^{p-1}\bigg\|^{\frac{1}{p-1}}_{L^{\frac{p}{p-1}}}   }
\end{equation}
Next by Proposition \ref{sun200} we have
\[
 \bigg\|\lambda_{+}^{k-1}   ( \tilde{u}_{+}^{k-1})^{p-1}  -\lambda_{-}^{k-1}     (\tilde{u}_{-}^{k-1})^{p-1}\bigg\|^{\frac{1}{p-1}}_{L^{\frac{p}{p-1}}}=  \bigg(  ( \lambda_{+}^{k-1}  )^{\frac{p}{p-1}}  +   ( \lambda_{+}^{k-1}  )^{\frac{p}{p-1}}\bigg)^{\frac{1}{p}}.
 \]
 Also
 \begin{align*}
  & \|\lambda_{+}^{k-1}  ( \tilde{u}_{+}^{k-1})^{p-1}  -\lambda_{-}^{k-1}     (\tilde{u}_{-}^{k-1})^{p-1}\|^{\frac{p}{p-1}}_{L^{\frac{p}{p-1}}}
= \int_{\Omega} (\lambda_{+}^{k-1} )^{\frac{p}{p-1}}    (\tilde{u}_{+}^{k-1})^{p}  +     ( \lambda_{-}^{k-1}  )^{\frac{p}{p-1}}    ( \tilde{u}_{-}^{k-1})^{p}\, dx\\
&  =  \int_{\Omega}   \bigg[\lambda_{+}^{k-1}  \,  (\tilde{u}_{+}^{k-1})^{p-1}- \lambda_{-}^{k-1}  \, (\tilde{u}_{-}^{k-1})^{p-1}  \bigg ]\, \bigg[ (\lambda_{+}^{k-1} )^{\frac{1}{p-1}} \,  \tilde{u}_{+}^{k-1}  - ( \lambda_{-}^{k-1}  )^{\frac{1}{p-1}} \, \tilde{u}_{-}^{k-1}\bigg]\,  dx\\
&  = \int_{\Omega}  (- \Delta_{p}  u^{k}  )\, [ (\lambda_{+}^{k-1} )^{\frac{1}{p-1}} \,  \tilde{u}_{+}^{k-1}
      -( \lambda_{-}^{k-1}  )^{\frac{1}{p-1}} \, \tilde{u}_{-}^{k-1}]\,
        \,   dx\\
&       =  \int_{\Omega}  | \nabla u^{k}|^{p-2}    \nabla u^{k} \cdot [(\lambda_{+}^{k-1} )^{\frac{1}{p-1}}  \nabla  \tilde{u}_{+}^{k-1} -  (\lambda_{-}^{k-1} )^{\frac{1}{p-1}} \lambda_{+}^{k-1}    \nabla \tilde{u}_{-}^{k-1}    ]\, dx\\
     &    \le \|\nabla u^{k}\|^{p-1}_{L^p} \| (\lambda_{+}^{k-1} )^{\frac{1}{p-1}}  \nabla  \tilde{u}_{+}^{k-1} - (\lambda_{-}^{k-1} )^{\frac{1}{p-1}}  \nabla \tilde{u}_{-}^{k-1}\|_{L^p}.
 \end{align*}
Inserting the last inequality  in  (\ref{fsun1}) and dividing by $\| u^{k}\|^{p}_{L^p}$  we obtain
\begin{equation*}
\frac{\int_{\Omega}  | \nabla u^{k}|^{p} \, dx   }{\int_{\Omega}  | u^{k}|^{p} \, dx  } \le
 (\frac{\int_{\Omega}  | \nabla u^{k}|^{p} \, dx   }{\int_{\Omega}   |u^{k}|^{p} \, dx  })^{\frac{p-1}{p}}\,
   \frac{\| (\lambda_{+}^{k-1})^{\frac{1}{p-1}}\,  \nabla  \tilde{u}_{+}^{k-1} -(\lambda_{-}^{k-1})^{\frac{1}{p-1}}\, \nabla \tilde{u}_{-}^{k-1}\|_{L^p}}
{(  ( \lambda_{+}^{k-1}  )^{\frac{p}{p-1}}  +   ( \lambda_{+}^{k-1}  )^{\frac{p}{p-1}} )^{\frac{1}{p}}}.
 \end{equation*}
The inequality above yields
\[
 \frac{\int_{\Omega}  | \nabla u^{k}|^{p} \, dx   }{\int_{\Omega}  | u^{k}|^{p} \, dx  } \le
\frac{\|(\lambda_{+}^{k-1})^{\frac{1}{p-1}}\,  \nabla  \tilde{u}_{+}^{k-1} -(\lambda_{-}^{k-1})^{\frac{1}{p-1}}\, \nabla \tilde{u}_{-}^{k-1}  \|^{p}_{L^p}}
{( \lambda_{+}^{k-1}  )^{\frac{p}{p-1}}  +   ( \lambda_{+}^{k-1}  )^{\frac{p}{p-1}}}.
\]
Thus
\begin{equation}\label{fsun10}
 \frac{\int_{\Omega}  | \nabla u^{k}|^{p} \, dx   }{\int_{\Omega}  | u^{k}|^{p} \, dx  } \le
  \frac{(\lambda_{+}^{k-1})^{\frac{p}{p-1}+1} + (\lambda_{-}^{k-1})^{\frac{p}{p-1}+1}}
  {( \lambda_{+}^{k-1}  )^{\frac{p}{p-1}}  +   ( \lambda_{+}^{k-1} )^{\frac{p}{p-1}}  }.
  \end{equation}

Form  \eqref{fsun10}  we infer
\begin{equation}\label{fa4}
 \lambda^{k}  \leq \max\left( \lambda_{+}^{k-1} ,  \lambda_{-}^{k-1} \right).
\end{equation}
\end{proof}

\section{Graph   $p$-Laplacian}
The aim of this part is to perform  data clustering  by using our algorithm. Given some data and a notion of similarity,  we aim to   partition  the input data into maximally homogeneous groups (i.e. clusters).  The main idea is to find a low-dimensional embedding and then to project data points to new space.  Recursive    bi-partitioning  method is widely used in many clustering algorithm  for the
multi-class problem.  As a basic idea, for given  graph  $G$ a   traditional spectral clustering algorithm uses the first $k$  eigenvectors
for  $\Delta_{G}$   as a low-dimensional embedding of the graph. Different graph Laplacian  and their basic properties, spectral clustering algorithms are described in \cite{Lux, MV}.

Let  $G = (V, E)$     be an undirected graph with vertex set $ V ={\{v_1, \cdots,  v_n\}}$  or simply $ V ={\{ 1, \cdots,  n\}},$ and $E$ is the set of edges.   The weighted adjacency
matrix    $W$   encode the similarity of pairwise data points, or weight $w_{ij} \ge 0$   between two vertices  $v_i$   and $ v_j$;
 \[
  W = (w_{ij} )\quad i,j=1,\cdots ,n.
  \]
  Note that   $G$   being  undirected means   $w_{ij} = w_{ji}.$  The degree of a vertex  $i \in V$  denoted by $d_i$     is
\[
d_i =\sum_{j\in V} w_{ij}.
\]
The degree matrix $D$ is defined as the diagonal matrix with
the degrees  $d_1, . . . , d_n$  on the diagonal.

  For given   graph $(V,   E)$  and  a  subset of vertex
 $C \subset V$  the   $\text{Cut}(C,C^c)$  (or the perimeter   $|\partial  C|$)  is defined by
\[
  \text{Cut}(C,C^c)  := \sum_{i \in C, j\in C^{c}} w_{ij},
\]
 where  $C^c$ is  the complement of $C$.
  The  ratio Cheeger cut   $\text{RCC}(C, C^c)$  and
normalized Cheeger cut $\text{NCC}(C, C^c)$ are defined  respectively, by
\[
\text{RCC}(C, C^c)= \frac{\text{cut}(C,C^c)}{\min(|C|, |C^c|)}, \quad \text{NCC}(C, C^c)= \frac{\text{cut}(C,C^c)}{\min(\text{vol}|C|, \text{vol}|C^c|)}.
 \]
 The minimum is achieved if  $ |C|= |C^c|.$

 For   function  $f : V\rightarrow \mathbb{R},$
the unnormalized  $p$-Laplace  operator  $\Delta_{p}^{u}$  and   normalized  $p$-Laplace operator  $\Delta_{p}^{n}$  are  defined as follow(depends  on the choice of inner product) :
\[
(\Delta_{p}^{u} f)_i=\sum_{j} \phi_{p}(f_i- f_j), \quad  \quad  (\Delta_{p}^{n} f)_i=\frac{1}{d_i} \sum_{j} \phi_{p}(f_i- f_j),
\]
where
\[
\phi_{p}(\cdot)=|\cdot|^{p-1} \rm{sign}(\cdot),
\]
 for $p=2$ we get  $\phi_{p}(x)=x.$  Also 	for  functions  $f, g  : V \rightarrow  \mathbb{R}$ the inner product is defined by
 \[
 < f,g >=\sum_{i\in V} d_i \, f_{i} g_{i}.
 \]
     Next consider the following  problem.  Find $f:V\rightarrow   \mathbb{R} $  and $\lambda \in \mathbb{R}$  such that
\[
(\Delta_{p}^{u} f)_i=  \lambda  \phi_{p}(f_i), \quad \forall  i \in V.
\]
We call $  \lambda $  an eigenvalue of  $\Delta_{p}^{u}$  associated with eigenvector  $f.$  In the case $ p = 2,$   the operator    $\Delta_{p}^{u} $  is  the regular
graph Laplacian given by
\[
\Delta_{2}^{u}= L =D-W.
\]
 The Rayleigh quotient is  defined  by
 \[
  R_p(f)=\frac{Q_{p}(f)}{\|f\|_{p}^{p}}=  \frac{\sum_{ij}(f_{i}-f_{j})^p}{2 \|f\|^p},
 \]
   with
 \[
 Q_{p}(f):=<f,\Delta_{p}^{u} f > =\frac{1}{2}  \sum_{i,j} w_{ij}|f_{i}-f_{j}|^{p}, \quad \quad  \|f\|^{p}_{p}= \sum_{i} |f_{i}|^{p}.
 \]
 Similar to continuous case, $f$  is a $p$-eigenfunction of $\Delta_{p}^{u}$  if and only if the functional $R_p$   has a critical point
at  $ f \in  \mathbb{R}^{|V|}$. The corresponding eigenvalue $\lambda_p $  is given as
\[
\lambda_p=R_{p}(f).
\]
The first eigenvalue is zero and the first eigenvector is a  constant vector. Then to find the second eigenvalue we  minimize  the Rayleigh quotient   $R_p(f)$ over all $f$ such that
\[
\sum_{i\in V} f_i=0,
\]
 which is the requirement of Algorithm 2  presented in previous section.

We point out that in \cite{BH}  is shown  that   the   cut  obtained by thresholding the second eigenvector of $p$-Laplace converges to optimal Cheeger  cut as $p$ tends to 1 and
\[
 \lambda_{2}(\Delta_1) =RCC^{*}.
 \]
However, finding optimal ratio Cheeger cut  $RCC^{*} = \underset {C\subset V} {\text{min}} \, RCC$ is NP-hard problem. To see more about minimization of $R_p(f)$ with constraint $\sum_{i\in V} f_i=0,$   see\cite{BH}.

\section{Numerical implementation}
In this section,  we briefly explain about numerical approximation of the following problem
 \begin{equation}\label{Sun1}
\left \{
\begin{array}{ll}
-\Delta_{p}  u =  f(x)   &    \textrm{ in  }      \partial  \, \Omega,\\
 u  =0      &    \textrm{ on }      \partial  \, \Omega.
  \end{array}
\right.
\end{equation}
 For $p> 1$   the solution of Problem (\ref{Sun1}) is the unique  minimizer of the following
functional
\[
E(u)= \underset{  u\neq 0}{\underset{u \in W^{1,p}_{0}(\Omega)}{ \min}}   \int_{\Omega} \frac{1}{p}  |\nabla u(x)|^{p}- f(x) u(x) \, dx.
\]
Equation in (\ref{Sun1})  is understood  in weak sense:

\begin{equation}\label{Sun2}
\int_{\Omega}|\nabla u|^{p-2} \nabla u \cdot \nabla\phi \, dx=   \int_{\Omega}f(x)  \, \phi \, dx,  \quad \forall \, \phi \in W^{1,
p}_{0}(\Omega).
\end{equation}
The discretization of problem is as follows. Let  $T_{h}$  be a regular triangulation of $\Omega_{h}$
which is composed of disjoint open regular triangles $ T_i,$  that is,
\[
\overline{\Omega}_{h}= \bigcup_{i=1}^{n} \overline{T}_i.
\]
Considering the  regularity for the solution of the $p$-Laplace equation,
  we  deal with continuous piecewise linear element.   Consider a  finite dimensional
 subspace $V_h$   of $C^0(\Omega_h),$  such
that the restriction      on elements of   $T_h,$  where $P_1$  is the linear function space:
\[
V^{h}_{0}={\{ v\in H^{1}_{0} : v \lfloor_{T} \in  \mathcal{P}_{1}, \quad \forall T \in T_h}\}.
\]
Assume  $u_n \in  V^{h}_{0} $  be the current approximation the,  we associate the  residual denoted by $R_{n}$

\[
  R_{n}   =  f(x)    -  \nabla( | \nabla u_{n} |^{p-2} \nabla u_{n}).
 \]
Equivalently,
\[
\int_{\Omega} R_{n}  \phi \, dx  =   \int_{\Omega}( f(x)\phi  -  | \nabla u_{n} |^{p-2} \nabla u_{n}  \cdot \nabla\phi )\, dx.
 \]
Then  to update $u_n$  and obtaining next approximation  denoted  by $u_{n+1}$
\[
u_{n+1}  = u_{n}  + \alpha_{n} w_{n},
\]
where $w_n$  is determined solution of linearized $p$-Laplace and the source term residual, i.e.,

 \begin{equation}\label{Sun3}
\left \{
\begin{array}{ll}
-\Delta  w_{n}  =  R_{n}    &    \textrm{ in  }       \Omega,\\
 w_{n}   =0      &    \textrm{ on }      \partial  \, \Omega,
  \end{array}
\right.
\end{equation}
Or
\[
  \int_{\Omega}(\varepsilon+ |\nabla u_{n} |)^{p-2}\,   \nabla w_{n}  \cdot \nabla\phi \, dx=   \int_{\Omega}R_{n}   \phi \, dx.
  \]

  The step length $\alpha_{n}  $  in  search direction $ w_{n}$ can be obtained as
  \[
  E(u_{n}  + \alpha_{n} w_{n} ) =\underset{ \alpha}{ \text{min  } }     E(u_n + \alpha w_n).
\]

\subsection{Numerical Examples }

 This section provides some examples of numerical approximations to the $p$-Laplace eigenvalue  problem for different values of $p$. For initial guess we use the second eigenfunction of Laplace operator.
 \begin{exam}\label{Oned}
	 Here, we verify our algorithm by invoking it in dimension one.  Let $ \lambda_{p}$ denotes the second eigenvalue in the interval $(a \,  b)$  then by result in  \cite{DM,Lin} we have
 \begin{equation}\label{Sun86}
\sqrt[p]{\lambda_{p}}=\frac{\sqrt[p]{p-1} \pi} {(b-a)  p \sin(\frac{\pi}{p})}.
\end{equation}
Let $\Omega=(-2,\, 2)$ then for $p=100$ the above formula gives:
\[
\sqrt[50]{\lambda_{100}} =2.0944 .
 \]
For      $p=50, p=100$     our approximation of second eigenvalue  are
\[
    \sqrt[50]{\lambda_{50} }=2.1717   \quad \quad    \sqrt[100]{\lambda_{100}}=2.0981,
    \]
with asymptotic given by \eqref{98} or \eqref{Sun86}
\[
\underset{p\rightarrow  \infty}{\lim}  \sqrt[p]{\lambda_{p}}=2.
\]

\begin{figure}[h!]
{\includegraphics[width=.7 \columnwidth]{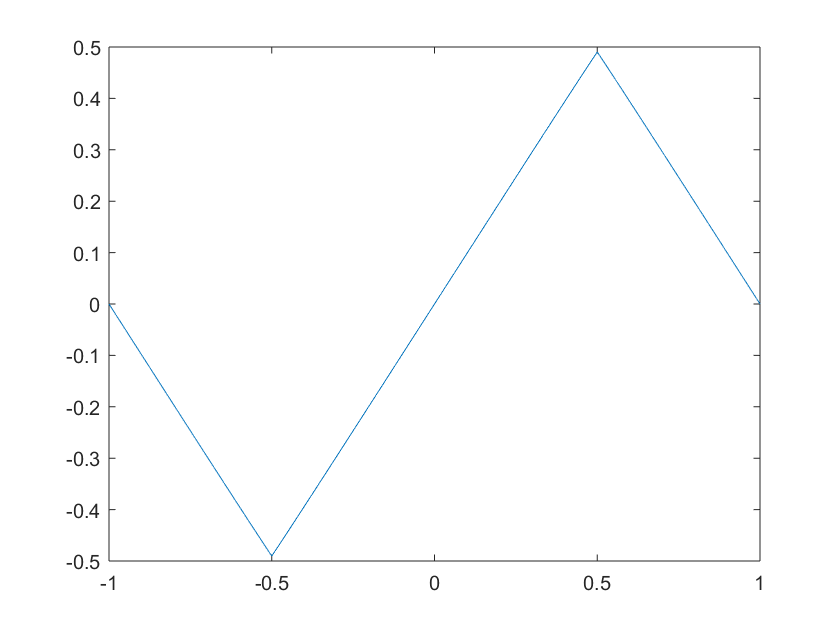}}
  \caption{ The second eigenfunction for $p=100$. }
  \label{fig:E}
\end{figure}

\end{exam}

\begin{exam}\label{E1}
 Let $p=2$ and  the domain be $ \Omega=[-2 \quad 2]\times [-2  \quad   2]$.  The second eigenvalue  is  $ \lambda_{2}=  \frac{5 \pi^2}{16}$. We set $ \triangle x =\triangle y=.005.$     The initial value is    given in Figure 2.   and  our approximate value after 50 iterations  is:  3.0843295.
 with
 \[
 | \lambda_{2}-\lambda_{2}^{(50)}|\le 0.002.
 \]

\begin{figure}[h!]
{\includegraphics[width=.7 \columnwidth]{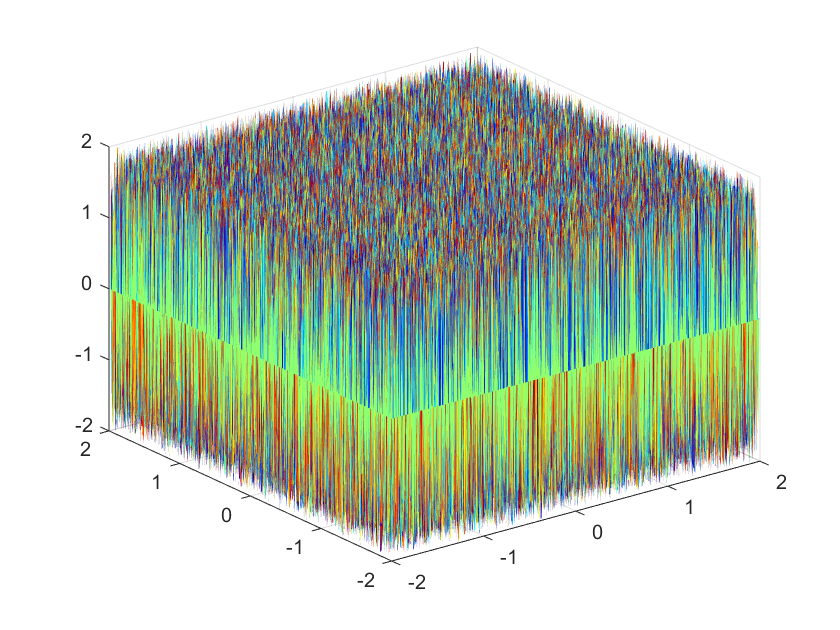}}
  \caption{ Initial guess}
  \label{fig:E}
\end{figure}

  In Figure 3, and 4   the second eigenfunctions for Dirichlet and Neumann  cases are shown.

\begin{figure}[h!]
{\includegraphics[width=.7 \columnwidth]{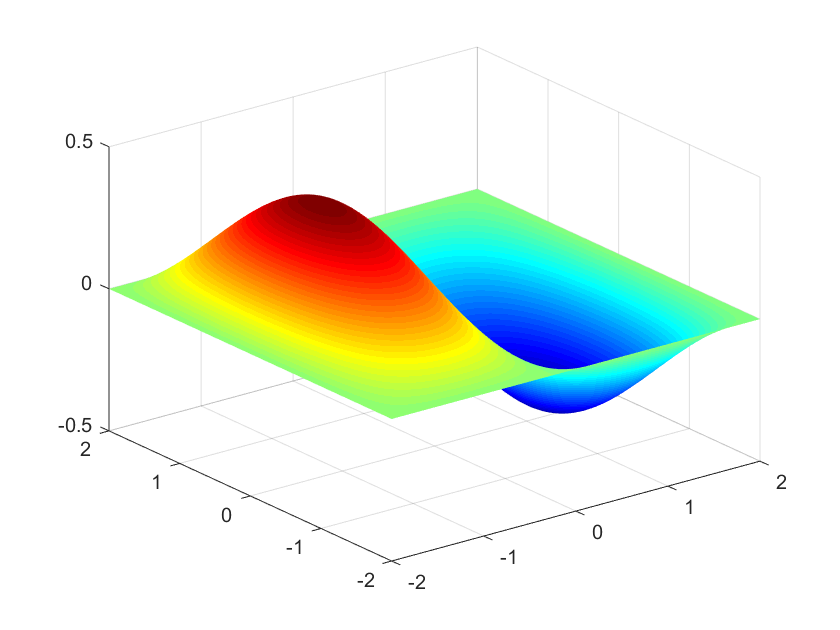}}
  \caption{Surface of $u_2$. }
  \label{fig:E}
\end{figure}

\begin{figure}[h!]
{\includegraphics[width=.7 \columnwidth]{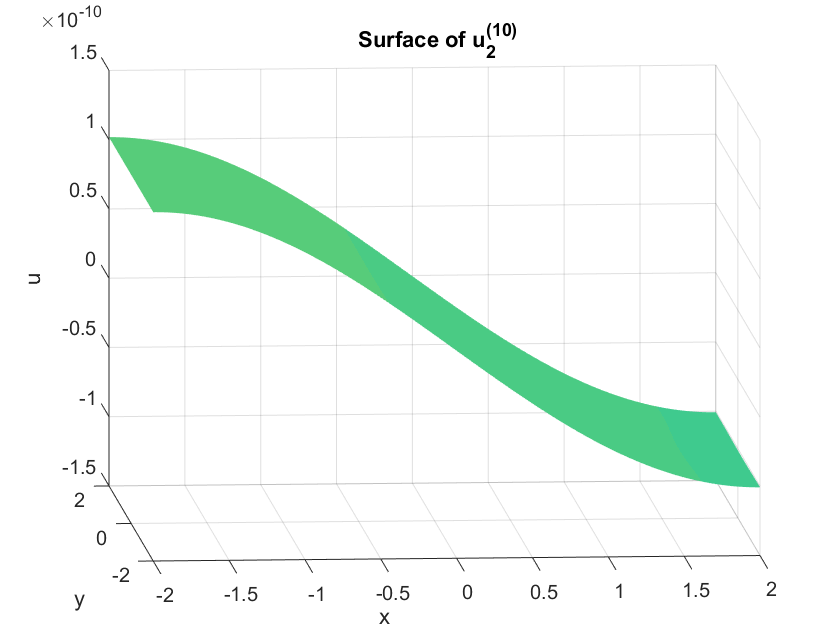}}
  \caption{Surface of $u_2$ for Neumann case.  }
  \label{fig:E}
\end{figure}

\end{exam}

\begin{exam}\label{E1}

Let domain be the  square $ [  0 \quad 2]\times  [  0 \quad 2]$. Picture \ref{fig:E} shows the second eigenfunction for $p=10.$

\begin{figure}[h!]
{\includegraphics[width=.8 \columnwidth]{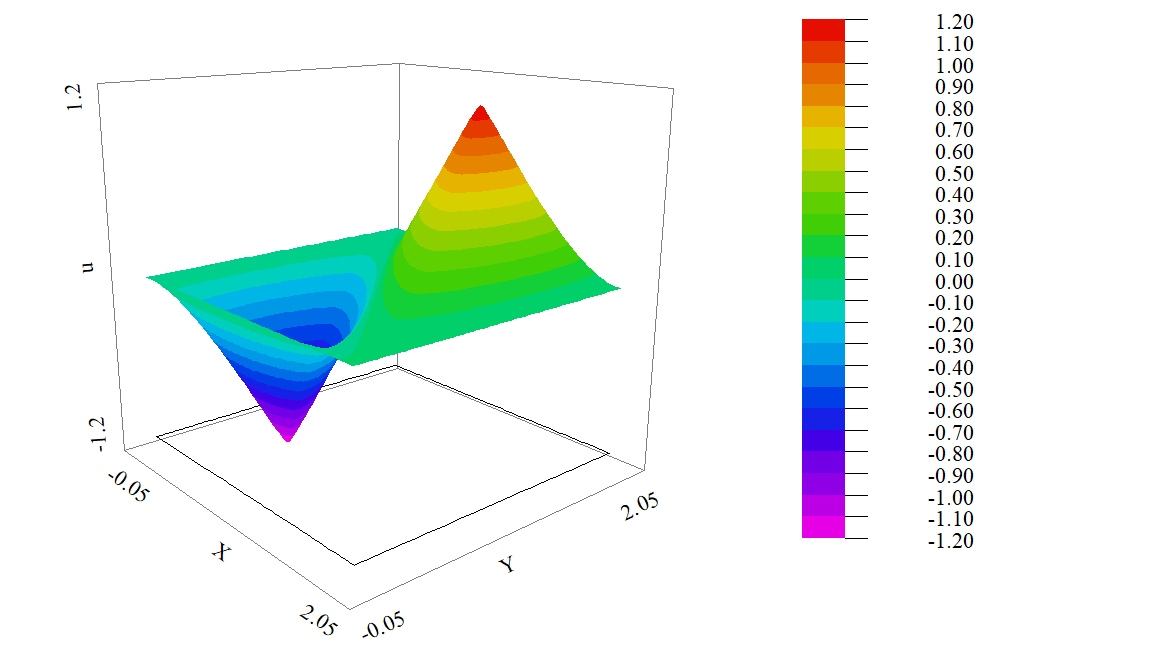}}
  \caption{Surface of $u_2$ for $p=10$.}
  \label{fig:E}
\end{figure}

\end{exam}

\begin{exam}\label{E10}

In this experiment, we use $\varepsilon$-graph.  The points are generated by    standard uniform distribution.   We choose $n=5000$, and  $\varepsilon=.05$.

 \begin{figure}[h]
          	\centering
          	\subfloat[  ]{\includegraphics[width=0.6\textwidth]{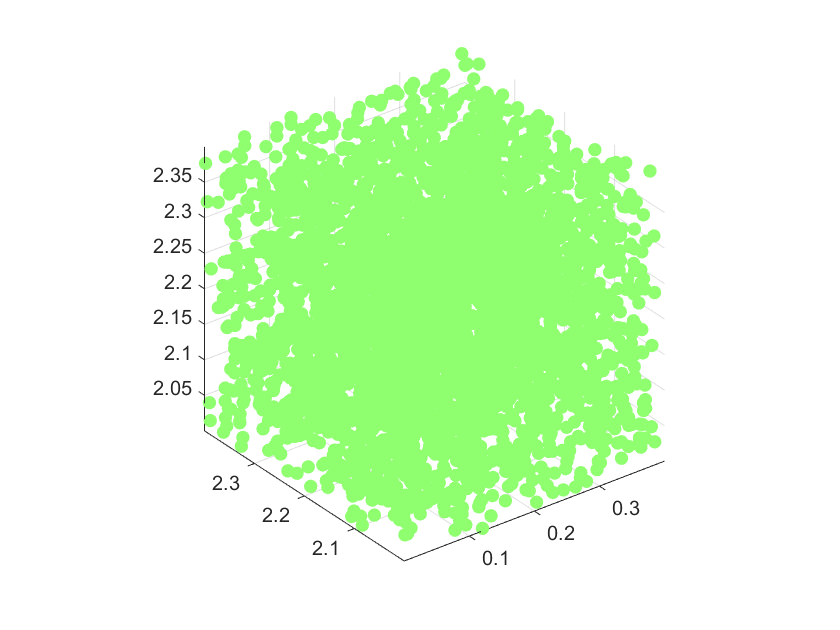}}
          	\quad
          	\subfloat[]{\includegraphics[width=0.80\textwidth]{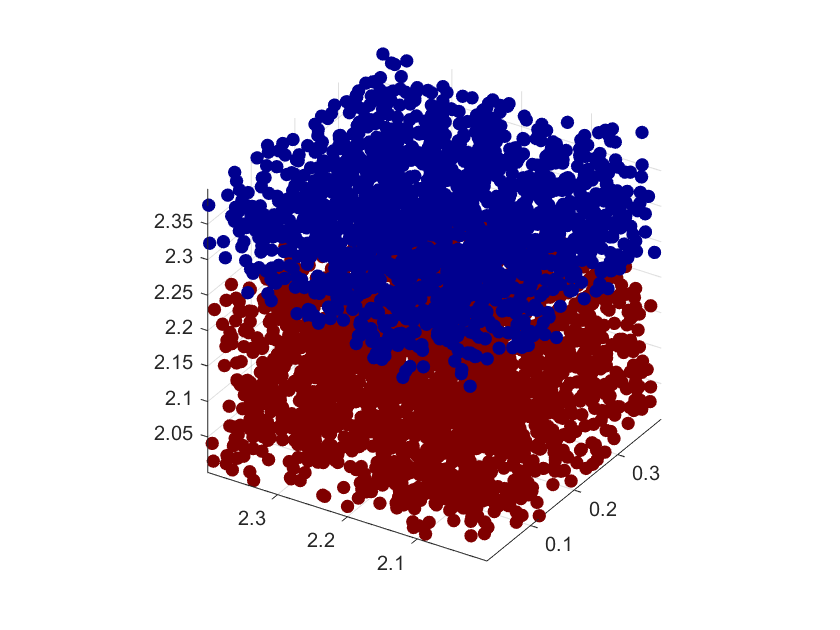}}
          	  	
          	\caption{ Top figure $n=5000$ points with   $\varepsilon=0.02.$  The second eigenvector for p=1.25}
          	\label{square-opd-fig}
          \end{figure}
\end{exam}

\section*{Acknowledgements}
{\scriptsize
	 \thanks{F. Bozorgnia was supported by the FCT post-doctoral fellowship
 SFRH/BPD/33962/2009 and by Marie Skłodowska-Curie grant agreement No. 777826 (NoMADS). The author is thankful to  Leon Bungert  for insightful discussion contributing to deepen on subsection 3.1}.}

\renewcommand{\refname}{REFERENCES }

\end{document}